\newcommand{\N}{\mathcal N}
\newcommand{\T}{\mathcal T}
\newcommand{\F}{\mathcal F}
\newcommand{\E}{\mathcal E}
\newcommand{\id}{{\rm id}}
\newcommand{\V}{\mathcal V}
\newcommand{\Vreg}{V_{\text{reg},h}}
\newcommand{\Vregphi}{V_{\text{reg}}}
\newcommand{\Vregphih}{V_{\text{reg},h}}
\newcommand{\rr}{\mathbb{R}}
\newcommand{\jumpleft}{[\![}
\newcommand{\jumpright}{]\!]}
\newcommand{\jump}[1]{\jumpleft #1 \jumpright}
\newcommand{\spacejump}[1]{\jump{#1}}
\newcommand{\averageleft}{\{\!\!\{}
\newcommand{\averageright}{\}\!\!\}}
\newcommand{\average}[1]{\averageleft #1 \averageright}
\newcommand{\hphi}{\hat \phi_h}
\newcommand{\lin}{{\text{\tiny lin}}}
\newcommand{\Omegalin}{\Omega^{\lin}}
\newcommand{\hatOmegalin}{\hat\Omega^{\lin}}
\newcommand{\Gammalin}{\Gamma^{\lin}}
\newcommand{\OGamma}{\Omega^{\Gamma}}
\newcommand{\TGamma}{{\mathcal T}^{\Gamma}}
\newcommand{\thetah}{\Theta_h}
\newcommand{\thetahGamma}{\Theta_h^\Gamma}
\newtheorem{remark}{Remark}
\title{$L^2$-error analysis of an isoparametric unfitted finite element method for elliptic interface problems}
\author{Christoph Lehrenfeld\thanks{Institut f\"ur Numerische und Angewandte Mathematik, University of G\"ottingen, D-37083 G\"ottingen,
Germany; email: {\tt lehrenfeld@math.uni-goettingen.de}}
\and Arnold Reusken\thanks{Institut f\"ur
Geometrie und Praktische  Mathematik, RWTH Aachen University, D-52056 Aachen,
Germany; email: {\tt reusken@igpm.rwth-aachen.de}}
}
\begin{document}
\maketitle
\begin{abstract}
In the context of unfitted finite element discretizations the realization of high order methods is challenging due to the fact that the geometry approximation has to be sufficiently accurate. 
Recently a new  unfitted finite element method was introduced which achieves a high order approximation of the geometry for domains which are  implicitly described by smooth level set functions. This method is based on  a parametric mapping which transforms a piecewise planar interface (or surface) reconstruction to a high order approximation.  In the paper [C. Lehrenfeld, A. Reusken, \emph{Analysis of a High Order Finite Element Method for Elliptic Interface Problems}, arXiv 1602.02970, Accepted for publication in IMA J. Numer. Anal.] an  a priori error analysis of the method applied to an interface problem is presented. The analysis reveals optimal order discretization error bounds in the $H^1$-norm. In this paper we extend this analysis and derive optimal $L^2$-error bounds.
\end{abstract}

\begin{keywords} 
unfitted finite element method,
isoparametric finite element method,
high order methods,
geometry errors,
interface problems,
Nitsche's method
 \end{keywords}
\begin{AMS} 65N30, 65N15, 65D05
 \end{AMS}


\section[Introduction]{Introduction}\label{sec:introduction}
In  recent years there has been a strong increase in the research on  development and analysis of \emph{unfitted finite element methods}, cf. for example
\cite{bastian2009unfitted,
  burman2015cut,
  deckelnick14,
  gross04,
  grossreusken07,
  hansbo2002unfitted,
  kummer13,
  olshanskii2009finite}. 
The design, realization and analysis of unfitted finite elements methods which are \emph{higher order} accurate is a challenging task. Especially in a setting where geometries are only implicitly described, e.g. by a zero level of a level set function, accurate numerical integration is difficult.
Different techniques have been developed to overcome this difficulty \cite{parvizianduesterrank07,muller2013highly,sudhakar2013quadrature,saye2015hoquad,cheng2010higher,dreau2010studied,fries2015,boiveau2016fictitious,burman2015cut,grande2014highorder}.
For a further discussion of the relevant literature  we refer to  \cite[Section 1.2]{CLARH1}.

Recently, in \cite{lehrenfeld15}, we introduced a new high order unfitted finite element method for scalar interface problems based on isoparametric mappings which is suitable for level set descriptions and higher order unfitted finite elements. The method is geometry based and is applicable to a wide range of problems, e.g. Stokes interface problems \cite{LPWL_PAMM_2016}, fictitious domain problems \cite{L_ARXIV_2016} or surface PDEs \cite{GLR_ARXIV_2016}. 

A detailed a-priori error analysis of the method applied to a scalar elliptic interface model problem is given in \cite{CLARH1}. Optimal $H^1$-norm discretization error bounds are derived for the case of a polygonal domain. In this paper we consider the same elliptic interface problem as in \cite{CLARH1} and extend the existing analysis in two directions. First, we allow for a \emph{piecewise smooth}, not necessarily polygonal, \emph{domain},  which is approximated with a higher order isoparametric boundary approximation. Secondly, we complement the existing analysis with \emph{optimal $L^2$-estimates}. 



The paper is organized as follows. In Section \ref{prelim} we introduce the model interface problem. We assume a level set description of the interface and discuss  a standard  piecewise planar interface approximation in Section~\ref{sec:lset}. A key ingredient in the higher order unfitted finite element method is the isoparametric mapping used to obtain higher order geometry approximations. In Section~\ref{isoparammap} we present the construction of this mapping and the isoparametric unfitted finite element method for the elliptic interface problem. In particular it is explained how the same extension technique, which is a standard component in isoparametric finite element methods for high order approximation of smooth boundaries, is applied both for the boundary and interface mesh transformation. 
A main contribution of this paper is the error analysis of the method, i.e. the derivation of optimal $H^1$- and $L^2$-error bounds. The $H^1$-error results obtained  in \cite{CLARH1} (for a polygonal domain) form the basis for this analysis.  In Section~\ref{sectOld} we recall the most important results from \cite{CLARH1} and adapt these to the case of a higher order isoparametric boundary approximation.
The $L^2$ error analysis is presented in Section~\ref{sec:erroranalysis}. Finally, in Section~\ref{sec:numex} we give results of a numerical experiment. 

We are aware of the fact that parts of the paper contain results that are essentially the same as in \cite{CLARH1}. In particular the material given in the sections~\ref{sec:lset}, \ref{sectlocal}, \ref{sectextension} and \ref{sectmethod} can also be found in \cite{CLARH1}. We decided to include this material to make this paper more self-contained and thus improve its readability.

\section{Model interface problem}\label{prelim} We consider the model problem
\begin{subequations} \label{eq:ellmodel}
\begin{align}
- \mathrm{div} (\alpha_i \nabla {u}) &= \, f_i 
\quad \text{in}~~ \Omega_i , ~i=1,2, \label{eq:ellmodel1} \\
\spacejump{{\alpha} \nabla {u} }_{\Gamma} \cdot n_\Gamma &= \, 0, \quad \spacejump{{u}}_{\Gamma} = 0 \quad \text{on}~~\Gamma, \label{eq:ellmodel2} \\
u &= 0 \quad \text{ on } \partial \Omega.
 \end{align}
\end{subequations}
Here, $\Omega_1 \cup \Omega_2= \Omega \subset \Bbb{R}^d$, $d=2,3$, is a nonoverlapping partitioning of the domain, $\Gamma = \overline{\Omega}_1 \cap \overline{\Omega}_2$ is the interface and  $\spacejump{\cdot}_{\Gamma}$ denotes the usual jump operator across  $\Gamma$. The source terms $f_i$ are given on each subdomain $\Omega_i$, $i=1,2$. In the remainder we also use the source term $f$ on $\Omega$ which we define as $f|_{\Omega_i}=f_i$, $i=1,2$. We assume $\Gamma$ to be characterized as the zero level of a level set function (not necessarily a distance function).
The diffusion coefficient $\alpha$ is assumed to be piecewise constant, i.e. it has a constant value $\alpha_i > 0$  on each sub-domain $\Omega_i$. The weak formulation of this problem is as follows: determine $u \in  H_0^1(\Omega)$ such that
\[
 \int_{\Omega} \alpha \nabla u \cdot \nabla v \, dx = \int_\Omega f v \, dx \quad \text{for all}~~v \in H_0^1(\Omega).
\]
In the error analysis we assume that $\overline{\Omega}_1$ is strictly contained in $\Omega$, i.e., $\Gamma= \partial \Omega_1$ and ${\rm dist}(\Gamma,\partial \Omega)>0$. Furthermore, we assume $\Gamma$ to be (globally) smooth and $\partial \Omega$ to be piecewise smooth. 

\section{Boundary parametrization and level set representation of the interface}\label{sec:lset}
The domain $\Omega$, which has a (piecewise) smooth boundary is approximated with a family of \emph{polygonal} approximations $\{\Omega_h^{\lin}\}_{h>0}$ corresponding to 
 a family $\{\T_h\}_{h>0}$ of simplicial shape regular triangulations of $\{\Omega_h^{\lin}\}_{h>0}$ which are not fitted to $\Gamma$, cf.~Fig.~\ref{fig:trafos}. In the analysis we assume quasi-uniformity of the triangulations, hence,  $h \sim h_T:={\rm diam}(T),~T \in \T_h$. In the remainder we take a fixed $\Omega_h^{\lin}$ with corresponding triangulation $\T_h$ and, to simplify the presentation, drop the index $h$.

The standard finite element space of continuous piecewise polynomials up to degree $k$ with respect to the domain $\Omegalin$ is denoted by $V_h^k$:
\[
  V_h^k:= \{ \, v_h \in C(\Omegalin)~|~{v_h}_{|T} \in \mathcal{P}_k \quad \text{for all}~ T\in \T, \quad {v_h}_{|\partial \Omegalin}=0\, \}.
\]
 The nodal interpolation operator in $V_h^k$ is denoted by $I_k$. In the remainder we take a fixed $k \geq 1$. We are particularly interested in  $k \geq 2$. This $k$ denotes the polynomial degree of the finite element functions used in our finite element method (explained in Section~\ref{sectmethod}). To obtain a high order finite element discretization of \eqref{eq:ellmodel} we need sufficiently accurate representations of $\partial \Omega$ and $\Gamma$. We now addresss these representations.

For simplicity  we assume that all boundary  vertices of $\partial \T = \partial \Omega^{\lin}$ are located on $\partial \Omega$ (this assumption is not essential, cf. \cite{BrennerScott}, Sect. 4.7). 
 On $\partial \T$ we assume a given  parametrization of the exact boundary $\partial \Omega$ of the form 
\begin{equation} \label{boundparam} g_b(x)=x+\chi_b(x),
\end{equation}
 such that $g_b: \, \partial \T \to \partial \Omega$ is a bijection. We assume that  $\partial \Omega$ has smoothness properties such that the function $\chi_b$ can be chosen  piecewise smooth on $\partial \T$ and satisfies 
\begin{equation} \label{eq:assumptionchi}
  \|\chi_b\|_{\infty, \partial \T} + h \|D\chi_b\|_{\infty,\partial \T} \lesssim h^2 ~\text{and}~ \max_{F \in \F(\partial \T)} \|D^l \chi_b\|_{\infty,F} \lesssim 1,~l=2,..,k+1.
\end{equation}
holds. Here $\F(\partial \T)$ denotes the set of all  boundary edges ($d=2$) or boundary faces ($d=3$). Note that $\chi_b(x_i)=0$ at all boundary vertices $x_i$ of $\partial \Omega^{\lin}$. The function $\chi_b$ will be input for the parametric mapping used in our method.

We assume that the smooth interface $\Gamma$ is the zero level of a smooth level set function $\phi:\, \Omega \to  \Bbb{R}$, i.e.,  $\Gamma= \{\, x \in \Omega~|~\phi(x)=0\,\}$ and $ \Omega_i = \{x \in \Omega~| ~\phi(x) \lessgtr 0 \}$. 
This level set function is not necessarily close to a distance function, but has the usual properties of a level set function: 
\begin{equation} \label{LSdef}
\|\nabla \phi(x)\| \sim 1~,~~\|D^2 \phi(x)\| \leq c \quad \text{for all}~x~~ \text{in a neighborhood $U$ of $\Gamma$}.
\end{equation}
In the error analysis we assume that the level set function has the smoothness property  $\phi \in C^{k+2}(U)$. 
As input for the parametric mapping we need an approximation $\phi_h \in V_h^k$ of $\phi$, and we assume that this approximation satisfies the error estimate
\begin{equation} \label{err2}
  \max_{T\in \T} |\phi_h - \phi|_{m,\infty,T \cap U} \lesssim h^{k+1-m},\quad 0 \leq m \leq k+1.
\end{equation}
Here $|\cdot|_{m,\infty,T\cap U}$ denotes the usual semi-norm on the Sobolev space $H^{m,\infty}(T\cap U)$ and the constant used in $\lesssim$ depends on $\phi$ but is independent of $h$.
Note that  \eqref{err2} implies the estimate
\begin{equation} \label{err1}
 \|\phi_h - \phi\|_{\infty, U} + h\|\nabla(\phi_h - \phi)\|_{\infty, U} \lesssim h^{k+1}. 
\end{equation}
The zero level of the finite element function $\phi_h$ (implicitly) characterizes the discrete interface. 
\emph{The piecewise linear nodal interpolation of $\phi_h$ is denoted by $\hphi = I_1 \phi_h$.} Hence, $\hphi(x_i)=\phi_h(x_i)$ at all vertices $x_i$ in the triangulation $\T$. 
The low order geometry approximation of the interface, which is needed in our discretization  method, is the zero level of this function:
$$\Gammalin := \{ \hphi = 0\}.$$ 
The subdomains corresponding to $\Gammalin$ are denoted by $ \Omega^{\lin}_{i}= \{x \in \Omegalin\,|\, \hphi(x) \lessgtr 0 \}$. Note that $\Omega^{\lin}_{1} \cup \Omega^{\lin}_{2} =\Omegalin \neq \Omega$. All elements in the triangulation $\T$ which are cut by $\Gammalin$ are collected in the set $\TGamma := \{T \in \T \, |\, T \cap \Gammalin \neq \emptyset \}$. The corresponding domain is $\OGamma := \{ x \in T \, |\, T\in \TGamma\}$.

We note that $\partial \Omegalin$ and   $\Gammalin$ are (only)  second order accurate approximations of $\partial \Omega$ and $\Gamma$, respectively. 
To achieve higher order accuracy with respect to the interface and the boundary approximation we consider a parametric mesh transformation in section \ref{sectdefiso}.
\section{Isoparametric mapping}\label{isoparammap}
The isoparametric mapping $\thetah$, defined on $\Omegalin$,  that is used in the finite element method is based on a  \emph{local} mapping $\thetahGamma$ on $\OGamma$ which is combined with a suitable \emph{extension technique}. The latter is taken such that $\thetahGamma$ is ``smoothly'' extended and $\thetah$ yields a sufficiently accurate boundary approximation. In the error analysis we need a mapping $\Psi:\,\Omegalin \to \Omega$ which is sufficiently close (in a higher order sense) to $\thetah$ and has the properties $\Psi(\Gammalin)=\Gamma$, $\Psi(\partial \Omegalin)=\partial \Omega$. The construction of this $\Psi$ is also based on a \emph{local} mapping $\Psi^\Gamma$, that is very similar to $\thetahGamma$, which is  extended with the same technique as used in the extension of $\thetahGamma$. The mappings $\thetah$ and $\Psi$ for the case \emph{without boundary approximation} (i.e., $\Omega$ is polygonal) are explained in detail in \cite{CLARH1}. In the subsections below we 
 explain how the 
construction of \cite{CLARH1} can be extended to allow for isoparametric boundary approximation.

In section~\ref{sectlocal} we recall the definitions of the local mappings $\thetahGamma$, $\Psi^\Gamma$, which are the same as in  \cite{CLARH1}. In section~\ref{sectextension}  we discuss a general boundary data extension technique, which is essentially the same as the extension method used in isoparametric finite elements \cite{lenoir1986optimal,bernardi1989optimal}. In section~\ref{sectdefiso} this extension technique is applied to both $\thetahGamma$ and $\Psi^\Gamma$ resulting in the global mappings $\thetah$ and $\Psi$. We give some results, e.g., on the smoothness of the extensions and on the approximation error in  $ \thetah \approx \Psi$, which are derived in \cite{CLARH1}. These results are used in the error analysis.

\subsection{Local mappings} \label{sectlocal}
In the construction of the local mapping $\Theta_h^\Gamma$ we need a projection step from a function which is piecewise polynomial but possibly discontinuous (across element interfaces) to the space of continuous finite element functions. 
Let $C(\TGamma) := \bigoplus\limits_{T\in \TGamma} C(T)$ and $V_h^k(\OGamma) := V_h^k|_{\OGamma}$. We introduce a projection operator $P_h^\Gamma: C(\TGamma)^d \rightarrow V_h^k(\OGamma)^d$. 
The projection operator relies on a nodal representation of the finite element space $V_h^k(\OGamma)$.
The set of finite element nodes $x_i$ in $\TGamma$ is denoted by $\N(\TGamma)$, and $\N(T)$ denotes the set of finite element nodes associated to $T \in \TGamma $. All elements $T \in \TGamma$ which contain the same finite element node $x_i$ form the set denoted by 
$ \omega(x_i)  := \{\, T \in \TGamma ~|~ x_i \in \N(T) \,\}, ~~ x_i \in \N(\TGamma)$.
Let $|\cdot|$ denote the cardinality of the set $\omega(x_i)$ and let $\psi_i$ be the nodal basis function corresponding to $x_i$. We define the projection operator $P_h^\Gamma: C(\TGamma)^d \to V_h^k(\OGamma)^d$ as 
\begin{equation} 
P_h^\Gamma v:= \!\!\!\!\! \sum_{x_i \in \N(\TGamma)} \!\!\!\!\! A_{x_i}(v) \, \psi_i, \text{ with }
  A_{x_i}(v):= \frac{1}{|\omega(x_i)|} \sum_{T \in \omega(x_i)} v_{|T}(x_i), ~~x_i \in \N(\TGamma).
\end{equation}
   %
This is a simple and well-known 
projection operator considered also in e.g., \cite[Eqs.(25)-(26)]{oswald} and \cite{ernguermond15}.
The  construction of $\Theta_h^\Gamma$ is motivated by the following mapping $\Psi^\Gamma \in C(\OGamma)$, defined in \eqref{psi1} below. We introduce  the search direction $G:=\nabla \phi$ 
and a  function $d: \OGamma \to \rr$ defined as follows:  $d(x)$ is the (in absolute value) smallest number such that
\begin{equation} \label{cond1}
  \phi(x + d(x) G(x))=\hat \phi_h(x)  \quad \text{for}~~x \in \OGamma.
\end{equation}
(Recall that $\hat\phi_h$ is the piecewise linear nodal interpolation of $\phi_h$.)
Given the function $dG \in C(\OGamma)^d \cap H^{1,\infty}(\OGamma)^d$ we define:
\begin{equation} \label{psi1}
 \Psi^\Gamma(x):= x + d(x) G(x), \quad x \in \OGamma.
\end{equation}
Note that the function $d$ and mapping $\Psi^\Gamma$ depend on $h$, through $\hat \phi_h$ in \eqref{cond1}. We do not show this dependence in our notation. 
By construction the mapping $ \Psi^\Gamma$ has the property $ \Psi^\Gamma(\Gammalin)=\Gamma$. Further properties of $\Psi^\Gamma$ are derived in \cite[Corollary 3.2 and Lemma 3.4]{CLARH1} and given in the following lemma.
\begin{lemma} \label{lem:boundpsig}
  The following holds:
  \begin{subequations}
  \begin{align} 
  |d(x_i)| & \lesssim h^{k+1} \quad \text{for all {\rm vertices} $x_i$ of}~ T \in \TGamma ,\label{resd4a0}\\
 \|\Psi^\Gamma - \id \|_{\infty,\OGamma} +h \|D\Psi^\Gamma - I\|_{\infty,\OGamma} & \lesssim h^2, \label{fistderPsi}\\
   \max_{T \in \TGamma} \Vert D^l \Psi^\Gamma \Vert_{\infty,T} & \lesssim 1, \quad l \leq  k+1.\label{fistderPsiA}
  \end{align}
\end{subequations}
\end{lemma}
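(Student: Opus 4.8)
The plan is to establish the three bounds in Lemma~\ref{lem:boundpsig} by working from the defining relation \eqref{cond1} for the scalar function $d$ and then reading off the corresponding estimates for $\Psi^\Gamma = \id + dG$.

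\textbf{Step 1: pointwise bound on $|d(x_i)|$ at vertices.} At a vertex $x_i$ of $T \in \TGamma$ we have $\hphi(x_i) = \phi_h(x_i)$, since $\hphi = I_1\phi_h$ interpolates $\phi_h$ at vertices. So the defining equation reads $\phi(x_i + d(x_i)G(x_i)) = \phi_h(x_i)$. Subtracting $\phi(x_i)$ from both sides and using a first-order Taylor expansion of $\phi$ along the search direction $G(x_i) = \nabla\phi(x_i)$, we get $d(x_i)\,\nabla\phi(x_i)\cdot G(x_i) + O(|d(x_i)|^2\|D^2\phi\|) = \phi_h(x_i) - \phi(x_i)$. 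Since $\|\nabla\phi\|\sim 1$ by \eqref{LSdef}, the quantity $\nabla\phi\cdot G = \|\nabla\phi\|^2 \sim 1$ is bounded away from zero, and the right-hand side is $O(h^{k+1})$ by \eqref{err1}. For $h$ small enough the quadratic remainder is absorbed (a fixed-point / implicit-function argument on the smallest root), yielding $|d(x_i)| \lesssim h^{k+1}$, which is \eqref{resd4a0}.

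\textbf{Step 2: $L^\infty$ and first-derivative bounds on $d$.} For a general $x \in \OGamma$ (not a vertex) the same Taylor argument gives $d(x)\,\|\nabla\phi(x)\|^2 \approx \hphi(x) - \phi(x)$, so it suffices to bound $\|\hphi - \phi\|_{\infty,\OGamma}$. Write $\hphi - \phi = (I_1\phi_h - I_1\phi) + (I_1\phi - \phi)$. The first term is controlled by stability of $I_1$ in $L^\infty$ together with $\|\phi_h - \phi\|_{\infty,U}\lesssim h^{k+1}$ from \eqref{err1}; the second is the standard piecewise-linear interpolation error $\|I_1\phi - \phi\|_{\infty} \lesssim h^2\|D^2\phi\|_\infty \lesssim h^2$. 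Hence $\|d\|_{\infty,\OGamma}\lesssim h^2$. For the gradient, differentiate \eqref{cond1} implicitly: $\nabla d$ depends on $\nabla(\hphi - \phi)$ and on $\nabla\phi$, $D^2\phi$, $d$; using $\|\nabla(\hphi-\phi)\|_\infty \lesssim h$ (again splitting off $I_1\phi$, whose gradient error is $O(h)$, and using \eqref{err1} for the $\phi_h$ part) together with the already-established $\|d\|_\infty\lesssim h^2$, one gets $\|\nabla d\|_{\infty,\OGamma}\lesssim h$. Since $\Psi^\Gamma - \id = dG$ and $G=\nabla\phi$ with $\|\nabla\phi\|\sim1$, $\|D^2\phi\|\lesssim1$, these translate directly into \eqref{fistderPsi}.

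\textbf{Step 3: higher derivative bounds.} For $2\le l\le k+1$ we differentiate \eqref{cond1} $l$ times. The key point is that $D^l\hphi$ vanishes for $l\ge2$ (it is piecewise linear), so only $D^l\phi$, the intermediate derivatives $D^j d$ with $j<l$, and $D^j\phi$ for $j\le l$ enter on each element $T\in\TGamma$. Since $\phi\in C^{k+2}(U)$, all $\|D^j\phi\|_{\infty}$ are $O(1)$ for $j\le k+1$; combined with the bounds $\|d\|_\infty\lesssim h^2$, $\|D^j d\|_\infty\lesssim h^{2-j}$ (which should come out of the same recursive differentiation, or at worst $\lesssim h^{-(j-1)}\cdot h \lesssim 1$ for the higher ones — in any case bounded) and $\|\nabla\phi\|\sim1$ bounded below, solving for $D^l d$ on each $T$ gives $\|D^l d\|_{\infty,T}\lesssim 1$, and thus $\|D^l\Psi^\Gamma\|_{\infty,T}\lesssim 1$, which is \eqref{fistderPsiA}. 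The main obstacle is Step~2/Step~3: making the implicit-differentiation bookkeeping rigorous and uniform in $h$, in particular verifying that the nonlinearity in \eqref{cond1} is genuinely invertible (the smallest-root $d$ is well-defined and smooth) on all of $\OGamma$ for $h$ small — this is where the nondegeneracy $\|\nabla\phi\|\sim1$ and the bound $\|D^2\phi\|\lesssim1$ of \eqref{LSdef} are essential, and where one invokes the implicit function theorem on each simplex with constants independent of the element. The remaining calculations are routine Taylor estimates and interpolation bounds.
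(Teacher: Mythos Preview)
Your approach is correct and is essentially the one taken in \cite{CLARH1} (Corollary~3.2 and Lemma~3.4), which the present paper simply cites without reproducing the argument: one Taylor-expands the defining relation \eqref{cond1} along the search direction $G=\nabla\phi$, uses $\|\nabla\phi\|\sim 1$ to invert for $d$, and then differentiates implicitly, exploiting that $D^l\hat\phi_h\equiv 0$ for $l\ge 2$ on each simplex. One minor clarification on Step~3: the scaling $\|D^j d\|_{\infty}\lesssim h^{2-j}$ is not what one gets (and would be problematic for $j\ge 3$); the correct inductive statement is simply $\|D^l d\|_{\infty,T}\lesssim 1$ for $2\le l\le k+1$, which follows because at each stage the new highest-order term $D^l d$ appears with coefficient $\nabla\phi(\Psi^\Gamma)\cdot G\sim 1$ while all remaining terms are products of already-bounded quantities.
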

The mapping $\Psi^\Gamma$ will be used in the error analysis, cf. section~\ref{sectdefiso}.
 
We now explain the construction of $\Theta_h^\Gamma$, which consists of two steps.  In the first step we introduce a discrete analogon of $\Psi^\Gamma$ defined in \eqref{psi1}, denoted by $\Psi_h^\Gamma$. 
Based on this $\Psi_h^\Gamma$, which can be discontinuous across element interfaces,  we obtain a continuous transformation $\thetahGamma \in C(\OGamma)^d$ by averaging with the projection operator $P_h^\Gamma$. For the construction of $\Psi_h^\Gamma$ we need an efficiently  computable and accurate approximation of $G=\nabla \phi$ on $\TGamma$.
For this we consider the following two options
  \begin{equation} \label{eq:gh}
    G_h(x) = \nabla \phi_h(x), \quad \text{or}~~G_h(x) = (P_h^\Gamma \nabla \phi_h)(x), \quad x \in T \in \TGamma.
  \end{equation}
 Let $\mathcal{E}_T \phi_h$ be the polynomial extension of $\phi_h|_T$. 
We define  a function $d_h: \TGamma \to [-\delta,\delta]
$, with $\delta > 0$ sufficiently small, as follows: $d_h(x)$ is the (in absolute value) smallest number such that  
  \begin{equation} \label{eq:psihmap}
    \mathcal{E}_T \phi_h(x + d_h(x) G_h(x)) = \hat \phi_h(x), \quad \text{for}~~ x\in  T \in \TGamma.
  \end{equation}
Clearly, this $d_h(x)$ is a ``reasonable'' approximation of the steplength $d(x)$ defined in \eqref{cond1}. Given the function $d_h G_h \in  C(\TGamma)^d$ we define
 \begin{equation} \label{eq:psih}
    \Psi_h^\Gamma(x) := x + d_h(x) G_h(x) \quad \text{for}~x \in T \in \TGamma,
  \end{equation}
which approximates the function $\Psi^\Gamma$ defined in \eqref{psi1}.
To remove possible discontinuities of $\Psi_h^\Gamma$ in $\OGamma$ we apply the projection to obtain
\begin{equation}
  \thetahGamma:= P_h^\Gamma \Psi_h^\Gamma = \id + P_h^\Gamma(d_h G_h).
\end{equation}
The approximation result in the following lemma is from Lemma 3.6 in \cite{CLARH1}.
\begin{lemma}\label{lem4}
The estimate
\begin{equation}
  \sum_{r=0}^{k+1} h^r \max_{T \in \TGamma}\Vert D^r( \thetahGamma - \Psi^\Gamma) \Vert_{\infty,T} 
\lesssim h^{k+1}
\end{equation}
holds.
\end{lemma}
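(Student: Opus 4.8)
The plan is to compare $\thetahGamma$ with $\Psi^\Gamma$ by inserting the nodal interpolant $I_k\Psi^\Gamma$ and exploiting that $P_h^\Gamma$ is a projection onto the continuous finite element space. By the implicit function theorem applied to \eqref{cond1} (with the derivative bounds \eqref{fistderPsiA}), $\Psi^\Gamma$ is smooth on every $T\in\TGamma$, and it is globally continuous on $\OGamma$; hence $I_k\Psi^\Gamma$ is a well-defined element of $V_h^k(\OGamma)^d$ with $P_h^\Gamma(I_k\Psi^\Gamma)=I_k\Psi^\Gamma$. Since $\thetahGamma=P_h^\Gamma\Psi_h^\Gamma$, this yields the splitting
\[
\thetahGamma-\Psi^\Gamma=P_h^\Gamma\big(\Psi_h^\Gamma-I_k\Psi^\Gamma\big)+\big(I_k\Psi^\Gamma-\Psi^\Gamma\big).
\]
As $k$ is fixed, it suffices to bound, for each $0\le r\le k+1$, the quantity $h^r\max_{T\in\TGamma}\|D^r(\cdot)\|_{\infty,T}$ of both terms by $h^{k+1}$.

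For the second term this is the standard local interpolation estimate $h^r\|D^r(I_k\Psi^\Gamma-\Psi^\Gamma)\|_{\infty,T}\lesssim h^{k+1}\|D^{k+1}\Psi^\Gamma\|_{\infty,T}$ combined with \eqref{fistderPsiA}. For the first term I would use the $L^\infty$-stability of $P_h^\Gamma$ on nodal patches together with a finite element inverse inequality: for $v\in C(\TGamma)^d$ the restriction $(P_h^\Gamma v)|_T$ is a vector polynomial of degree $k$ with $\|P_h^\Gamma v\|_{\infty,T}\lesssim\|v\|_{\infty,\omega(T)}$, where $\omega(T):=\bigcup_{x_i\in\N(T)}\omega(x_i)$, so that $h^r\|D^r P_h^\Gamma v\|_{\infty,T}\lesssim\|v\|_{\infty,\omega(T)}$. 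Taking $v=\Psi_h^\Gamma-I_k\Psi^\Gamma$ and applying the interpolation estimate once more on each element of the patch, the whole lemma reduces to the single $L^\infty$ bound
\[
\max_{T\in\TGamma}\|\Psi_h^\Gamma-\Psi^\Gamma\|_{\infty,T}=\max_{T\in\TGamma}\|d_hG_h-d\,G\|_{\infty,T}\lesssim h^{k+1}.
\]

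This estimate is the heart of the proof. By \eqref{cond1} and \eqref{eq:psihmap} we have on each $T\in\TGamma$ that $\phi(x+d(x)G(x))=\hphi(x)=\mathcal E_T\phi_h(x+d_h(x)G_h(x))$, and a first-order Taylor expansion of the polynomial $\mathcal E_T\phi_h$ along the segment joining $x+dG$ and $x+d_hG_h$ gives
\[
\Big(\int_0^1\nabla\mathcal E_T\phi_h\big(x+dG+t(d_hG_h-dG)\big)\,dt\Big)\cdot(d_hG_h-dG)=\phi(x+dG)-\mathcal E_T\phi_h(x+dG).
\]
I would control the right-hand side by the consistency estimate $\|\phi-\mathcal E_T\phi_h\|_{\infty,U_T}\lesssim h^{k+1}$ on an $\mathcal O(h)$-neighbourhood $U_T\subset U$ of $T$, which follows from \eqref{err2} by comparison with the degree-$k$ Taylor polynomial of $\phi$ and norm equivalence on $\mathcal P_k$; note that $|d|,|d_h|\lesssim h^2$ (from \eqref{fistderPsi}, resp.\ from \eqref{eq:psihmap} together with $\|\hphi-\phi_h\|_{\infty,T}\lesssim h^2$) keeps both evaluation points in $U_T$. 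On the left I would decompose $d_hG_h-dG=(d_h-d)G+d_h(G_h-G)$: the term $d_h(G_h-G)$ has size $h^2\cdot h^k=h^{k+2}$, since $\|G_h-G\|_{\infty,\OGamma}\lesssim h^k$ for both choices in \eqref{eq:gh} (by \eqref{err1} for $G_h=\nabla\phi_h$, and additionally using an $\mathcal O(h^k)$ bound on the inter-element jumps of $\nabla\phi_h$ for $G_h=P_h^\Gamma\nabla\phi_h$), while for the remaining part the integrand pairs with $G=\nabla\phi$ as $\nabla\mathcal E_T\phi_h\cdot\nabla\phi$, which by $\|\nabla\phi\|\sim1$ and $\|\nabla(\phi-\mathcal E_T\phi_h)\|_{\infty,U_T}\lesssim h^k$ is bounded below by a positive constant for $h$ small. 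Solving for $d_h-d$ gives $|d_h-d|\lesssim h^{k+1}$, hence $\|d_hG_h-dG\|_{\infty,T}\lesssim|d_h-d|\,\|G\|_{\infty}+|d_h|\,\|G_h-G\|_{\infty}\lesssim h^{k+1}$.

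I expect the main obstacle to be the consistency estimate for the \emph{extended} polynomial $\mathcal E_T\phi_h$ on an $\mathcal O(h)$-neighbourhood of $T$ (rather than on $T$, where \eqref{err2} applies directly), and the bookkeeping that turns the mean-value identity into an estimate on $|d_h-d|$ — i.e.\ checking that $d_hG_h-dG$ is, up to $\mathcal O(h^{k+2})$ terms, aligned with $\nabla\phi$, so that its pairing with $\nabla\mathcal E_T\phi_h$ does not degenerate. One must also verify, uniformly for $h$ small, the auxiliary facts used above: $|d_h|\lesssim h^2$, the $\mathcal O(h^k)$ jump bound for $\nabla\phi_h$, and that all evaluation points remain inside the neighbourhood $U$ on which $\phi\in C^{k+2}$.
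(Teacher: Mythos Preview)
Your proposal is correct and follows what is essentially the standard route; note that the present paper does not give a proof but simply cites \cite[Lemma~3.6]{CLARH1}. The decomposition via $I_k\Psi^\Gamma$ together with $P_h^\Gamma$-stability and inverse estimates, followed by the pointwise estimate $|d_h-d|\lesssim h^{k+1}$ obtained from the defining relations \eqref{cond1}--\eqref{eq:psihmap}, is exactly the structure one expects and matches the argument in \cite{CLARH1}. The technical points you flag (extending the $\phi-\mathcal E_T\phi_h$ bound to an $\mathcal O(h)$-neighbourhood of $T$, the a~priori bound $|d_h|\lesssim h^2$, and the nondegeneracy $\nabla\mathcal E_T\phi_h\cdot G\gtrsim 1$ near $T$) are genuine but routine, and your sketch handles them correctly.
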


\subsection{Finite element boundary data extension method} \label{sectextension}
Let $\hat\T \subset \T$ be a subset of the triangulation such that $ \hatOmegalin :=\cup_{T \in \hat\T} T$ is a connected subdomain of $\Omegalin$. We also need the notation $\partial \hat \T= \partial \hatOmegalin$, $\hat \T^{\rm bnd}:=\{\, T\in \hat \T~|~ T \cap \partial \hat \T \neq \emptyset\}$, $\hat \T^{\rm int}= \hat \T \setminus\hat \T^{\rm bnd}$, cf. Figure \ref{fig:figt1t2} (left). Given such a connected triangulation $\hat \T$, boundary data $g \in C(\partial \hatOmegalin)$ and a polynomial degree $k \geq 1$, a \emph{linear extension operator} $\E_k(\hat  \T,g)$ is treated in \cite{CLARH1}, which is essentially the same as the finite element extension technique studied in \cite{lenoir1986optimal,bernardi1989optimal}. This type of extension operator is a standard component in isoparametric finite element methods for high order boundary approximations.

\begin{figure}
  \begin{center}
    \hfill
    \begin{tikzpicture}[scale=0.95]
      \input{figuresubdom}
      \draw[] (2.25,1.5) -- (2.25,1) -- (2.75,1.25) -- cycle ;
      \node[right,scale=1] at (2.75,1.25) {$T\!\in\!\hat \T$};
      \draw[fill=blue!35!white] (2.25,0.5) -- (2.25,0) -- (2.75,0.25) -- cycle ;
      \node[right,scale=1] at (2.75,0.25) {$T\!\in\!\hat \T^{\rm int}$};
      \draw[fill=yellow!50] (2.25,-0.5) -- (2.25,-1) -- (2.75,-0.75) -- cycle ;
      \node[right,scale=1] at (2.75,-0.75) {$T\!\in\!\hat \T^{\rm bnd}$};
      \draw[ultra thick, red] (2.25,-1.75)  -- (2.75,-1.75) node[right,scale=1]{\color{black}$\partial \hat \T$};
    \end{tikzpicture}
    \hfill
    \begin{tikzpicture}[scale=0.95]
      \input{figuret1t2}
      \draw[fill=red,fill opacity=0.45] (2.25,1.5) -- (2.25,1) -- (2.75,1.25) -- cycle ;
      \node[right,scale=1] at (2.75,1.25) {$T\!\in\!\T^\Gamma$};
      \draw[fill=green,fill opacity=0.25] (2.25,0.5) -- (2.25,0) -- (2.75,0.25) -- cycle ;
      \node[right,scale=1] at (2.75,0.25) {$T\!\in\!\T_1$};
      \draw[fill=blue,fill opacity=0.35] (2.25,-0.5) -- (2.25,-1) -- (2.75,-0.75) -- cycle ;
      \node[right,scale=1] at (2.75,-0.75) {$T\!\in\!\T_2$};
      \draw[thick] (2.25,-1.75)  -- (2.75,-1.75) node[right,scale=1]{$\Gammalin$};
    \end{tikzpicture}
    \hfill
  \end{center}
  \caption{Connected subtriangulation $\hat \T \subset \T$ with $\hat \T^{\rm int}$, $\hat \T^{\rm bnd}$ and $\partial \hat \T$ (left) and decomposition of a triangulation $\T$ into $\T^\Gamma$, $\T_1$ and $\T_2$ (right).}
  \label{fig:figt1t2}
\end{figure}

This operator is  local, meaning that it is applied elementwise for $T\in \hat T^{\rm bnd}$. Its construction is hierarchical in the sense that first an extension based on values at vertices lying on $\partial \hat \T$ is determined, then an extension from edges  lying on $\partial \hat \T$ and finally (for the three-dimensional case) an extension of the data from faces lying on $\partial \hat \T$ is determined. For a precise definition of this operator we refer to \cite{CLARH1}. In the following lemma we summarize some properties of this operator which are derived in  \cite{CLARH1,lenoir1986optimal,bernardi1989optimal}.
\begin{lemma}
  Let $\V(\partial \hat \T)$ denote the set of vertices in $\partial\hat \T$ and $\F(\partial\hat \T)$ the set of all edges ($d=2$) or faces ($d=3$) in $\partial\hat \T$. For $\E(g):=\E_k(\hat \T,g)$ the following holds
  \begin{subequations}
\begin{align}
  \E(g) &\in C(\hatOmegalin), \quad \E(g)=0~~\text{on $\hat T^{\rm int}$}, ~~
  \E(g)_{|\partial \hat \T}  =g. \label{prop1} \\
   \text{For all $T \in \hat \T^{\rm bnd}$~:}~&~\text{~if  $T \cap \partial \hat \T \in \V(\partial \hat \T)$ then $~\E(g) \in \mathcal{P}_1(T)$}. \label{prop2}\\
  \text{For all $T \in \hat \T^{\rm bnd}$~:}~ &~  \text{if $~g \in \mathcal{P}_k(T \cap \partial \hat \T)~$ then  $~\E(g) \in \mathcal{P}_k(T)$}. \label{prop3}
\end{align}
Furthermore, 
\begin{align}
  \|D^n\mathcal{E}(g)\|_{\infty,\hatOmegalin}   & \lesssim \max_{F \in \F(\partial \hat \T)} \sum_{r=n}^{k+1} h^{r-n} \|D^rg\|_{\infty,F} \nonumber \\ & \quad +\, h^{-n}\max_{x_i \in \V(\partial \hat \T)}|g(x_i)| ,\quad n=0,1, \label{estext1} \\
  \max_{T \in \hat\T} \|D^n\mathcal{E}(g)\|_{\infty,T} & \lesssim \sum_{r=n}^{k+1} h^{r-n} \|D^r g\|_{\infty,F}, \quad n=2,\ldots, k+1,\label{estext2}
\end{align}
\end{subequations}
for all $g \in C(\partial \hat T)$ such that $g \in C^{k+1}(F)$ for all $F \in \F(\partial \hat \T)$. 
\end{lemma}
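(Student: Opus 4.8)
The plan is to use that $\mathcal{E}:=\mathcal{E}_k(\hat\T,\cdot)$ is assembled elementwise on $\hat\T^{\rm bnd}$ in a hierarchical way, and to reduce every assertion to an affine-scaling argument on a fixed reference simplex $\hat T$. First I would recall the construction. On $T\in\hat\T^{\rm bnd}$ one proceeds in $d$ stages: stage $0$ is the piecewise-linear vertex extension $\mathcal{E}_0 g\in\mathcal{P}_1(T)$ that equals $g$ at the vertices of $T$ lying on $\partial\hat\T$ and vanishes at the other vertices of $T$; with $g_1:=g-\mathcal{E}_0 g$ on $\partial\hat\T$ (which vanishes at every vertex of $\partial\hat\T$), stage $1$ adds, for each edge $F\subset\partial\hat\T$ of $T$, a barycentric-coordinate extension $\mathcal{E}_1(g_1|_F)$ which reproduces $g_1|_F$ on $F$, vanishes at the endpoints of $F$, and vanishes on the faces of $T$ not containing $F$; for $d=3$ one continues with $g_2:=g_1-\sum_F\mathcal{E}_1(g_1|_F)$ on $\partial\hat\T$ (vanishing on every edge of $\partial\hat\T$) and a face extension $\mathcal{E}_2(g_2|_F)$ for each face $F\subset\partial\hat\T$ of $T$. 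Each of $\mathcal{E}_0,\mathcal{E}_1,\mathcal{E}_2$ is affine-equivalent to a fixed model operator $\hat{\mathcal{E}}$ on $\hat T$ which maps polynomial data of degree $\le j$ on the relevant sub-simplex to polynomials of degree $\le j$ on $\hat T$, for every $0\le j\le k$. This construction is detailed in \cite{CLARH1} and goes back to \cite{lenoir1986optimal,bernardi1989optimal}.

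Granting the construction, \eqref{prop1}--\eqref{prop3} follow by inspection. Continuity of $\mathcal{E}(g)$ on $\hatOmegalin$, the identity $\mathcal{E}(g)_{|\partial\hat\T}=g$, and $\mathcal{E}(g)\equiv 0$ on $\hat\T^{\rm int}$ are consequences of the stagewise matching of vertex, edge and face data and of the fact that no data enters from the interior side. If $T\cap\partial\hat\T$ is a single vertex, only stage $0$ contributes, so $\mathcal{E}(g)_{|T}\in\mathcal{P}_1(T)$, which is \eqref{prop2}. If $g\in\mathcal{P}_k(T\cap\partial\hat\T)$, then $\mathcal{E}_0 g\in\mathcal{P}_1(T)\subset\mathcal{P}_k(T)$, the residuals $g_1,g_2$ restricted to the relevant sub-simplices are again polynomials of degree $\le k$, and each edge/face extension keeps the output in $\mathcal{P}_k(T)$, which is \eqref{prop3}.

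For the estimates I would bound the three stages separately. The linear interpolant satisfies the standard bound $\|D^n\mathcal{E}_0 g\|_{\infty,T}\lesssim h^{-n}\max_{x_i\in\V(\partial\hat\T)}|g(x_i)|$ for $n=0,1$ on a shape-regular simplex with $h_T\sim h$, and $D^n\mathcal{E}_0 g\equiv 0$ for $n\ge 2$; this is exactly the separated term $h^{-n}\max_{x_i}|g(x_i)|$ in \eqref{estext1} and contributes nothing to \eqref{estext2}. For the edge and face stages the decisive fact is that the input ($g_1|_F$, resp. $g_2|_F$) vanishes on $\partial F$. Pulling back to $\hat T$ via the affine map $\Phi:\hat T\to T$ with $\|D\Phi\|\sim h$, $\|D\Phi^{-1}\|\sim h^{-1}$, one uses $\|D^n\hat{\mathcal{E}}\hat\psi\|_{\infty,\hat T}\lesssim\sum_{r=\max(n,1)}^{k+1}\|D^r\hat\psi\|_{\infty,\hat F}$ for data $\hat\psi$ vanishing on $\partial\hat F$: on the degree-$k$ Lagrange interpolant of $\hat\psi$ this holds because $\hat{\mathcal{E}}$ preserves polynomial degree, so $\hat\psi\mapsto D^n\hat{\mathcal{E}}\hat\psi$ annihilates $\mathcal{P}_{n-1}$ and the derivatives $\{D^r\}_{r=\max(n,1)}^{k}$ form a norm on a complementary finite-dimensional space, while the interpolation remainder is controlled by $\|D^{k+1}\hat\psi\|_{\infty,\hat F}$ alone via Bramble--Hilbert. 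Transforming back, each $x$-derivative costs $h^{-1}$ and each tangential derivative of the datum on $F$ carries $h$, turning this into $\sum_{r=\max(n,1)}^{k+1}h^{r-n}\|D^r(g_j|_F)\|_{\infty,F}$; since $D^r(g_j|_F)=D^r(g|_F)$ for $r\ge 2$, while $\|g_1|_F\|_{\infty,F}\lesssim h\|Dg\|_{\infty,F}$ and $\|D(g_1|_F)\|_{\infty,F}\lesssim\|Dg\|_{\infty,F}$ by the mean value theorem (and the analogous bounds for $g_2|_F$ by a recursive use of the edge estimate), every term re-expresses in $\|D^r g\|_{\infty,F}$ and fits inside the sums in \eqref{estext1}--\eqref{estext2}. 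Summing the three contributions and taking the maximum over $T\in\hat\T^{\rm bnd}$ — uniform by shape regularity and quasi-uniformity — yields \eqref{estext1} on all of $\hatOmegalin$ for $n=0,1$ and the elementwise bound \eqref{estext2} for $n=2,\dots,k+1$.

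The step I expect to be the main obstacle is this last round of bookkeeping: verifying that the summation ranges come out precisely as written, i.e. that the vertex stage supplies exactly the separately displayed $h^{-n}\max_{x_i}|g(x_i)|$ term, that the edge/face stages — whose data vanishes on $\partial F$ — yield only sums starting at $r=\max(n,1)$ (so that no spurious $h^{-1}\|g\|$-type term appears), and that the recursive passage from $g_2|_F$ back to $g$ does not degrade the exponents. The remaining ingredients (the affine scalings, the interpolation stability, and the norm equivalence on $\hat T$) are routine; a complete treatment is in \cite{CLARH1}, with the model operator analysed in \cite{lenoir1986optimal,bernardi1989optimal}.
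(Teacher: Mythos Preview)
Your approach is sound and aligns with the standard arguments in \cite{lenoir1986optimal,bernardi1989optimal,CLARH1}. Note, however, that the paper itself does not prove this lemma: it only states the properties and refers to \cite{CLARH1,lenoir1986optimal,bernardi1989optimal} for the derivation, so there is no ``paper's own proof'' to compare against beyond those citations. Your sketch --- the hierarchical vertex/edge/face construction, the inspection arguments for \eqref{prop1}--\eqref{prop3}, and the affine pull-back to a reference simplex combined with the observation that the residual data at each stage vanishes on the lower-dimensional sub-simplices --- is precisely the route taken in those references, and your identification of the bookkeeping on the summation ranges as the only nontrivial part is accurate.
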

\ \\[1ex]
The results in \eqref{prop1}-\eqref{prop3} give basic structural properties of the extension operator. In particular the result in \eqref{prop3} yields that piecewise polynomial boundary data result in piecewise polynomial extensions of the same degree. The results in \eqref{estext1}, \eqref{estext2} measure smoothness properties of the extension in terms of the smoothness of the boundary data.
\subsection{Global isoparametric mapping} \label{sectdefiso} 
We use the local mappings  $\thetahGamma$, $\Psi^\Gamma$ on $\OGamma$ and the boundary parametrization $\chi_b$ as in \eqref{boundparam}, and combine these with the extension technique to obtain global mappings $\thetah$ and $\Psi$. For this we decompose the triangulation $\T$ into disjoint subsets as follows
\[
 \T=\T^\Gamma\cup \T_1 \cup \T_2, \quad \text{with}~~\T_1:=\{\, T\in \T~|~(\hat\phi_h)_{|T} < 0\,\},~~\T_2:=\{\, T\in \T~|~(\hat\phi_h)_{|T} > 0\,\}.
\]
We assumed that $\overline{\Omega}_1$ (where $\phi <0$ holds) is strictly contained in $\Omega$. Hence (for $h$ sufficiently small) we have that $\T_1$ is strictly contained in $\Omega$. On $\partial \T_1$ we have boundary data $\thetah$ and $\Psi^\Gamma$, which will be extended to $\T_1$, cf. below. The boundary of $\T_2$ consists of two disjoint parts, namely  $\partial \T_2= \partial \T \cup (\partial\T_2\setminus \partial \T)$, cf. Figure \ref{fig:figt1t2} (right).
On $(\partial\T_2\setminus \partial \T) \subset \partial \T^\Gamma$ we have boundary data $\thetah$ and $\Psi^\Gamma$. On $\partial\T$ we have boundary data given by the parametrization $\chi_b$.
Below, boundary data on the two parts $\partial\T_2\setminus \partial \T$ and $\partial \T$ of $\partial \T_2$ are denoted as a pair of functions $(g_1,g_2)$. Using the extension technique, the local  mappings  $\Psi^\Gamma$ and $\thetahGamma$ on $\OGamma$ we can define \emph{global} mappings $\Psi$, $\thetah$ as follows:
\begin{subequations}\label{deftheta}
\begin{align}
    \Psi & := \left\{ \begin{array}{r@{~}lcl}
                              \Psi^\Gamma & & \text{ on} & \T^\Gamma\!\!\!, \\
                              \id &+\mathcal{E}_k\big(\T_1,(\Psi^\Gamma -\id)\big) & \text{ on} & \T_1, \\
                               \id &+\mathcal{E}_k\big(\T_2,(\Psi^\Gamma_{\hphantom{h}} -\id, \hphantom{\chi_b} \chi_b)\big) & \text{ on} & \T_2,
                              \end{array}\right. \label{defpsi}\\
 \thetah &:= \left\{ \begin{array}{r@{~}lcl}
                       \thetahGamma &\hphantom{+\mathcal{E}_k\big(\T_1,(\Psi^\Gamma -\id)\big)}& \text{ on} & \T^\Gamma\!\!\!, \\
                              \id &+\mathcal{E}_k\big(\T_1,(\thetahGamma -\id)\big) & \text{ on} & \T_1, \\
                               \id &+\mathcal{E}_k\big(\T_2,(\thetahGamma -\id, I_k\chi_b)\big) & \text{ on} & \T_2.
                              \end{array}\right.  \label{defthetah}
\end{align}
\end{subequations}
Here $I_k\chi_b$ denotes the piecewise polynomial nodal interpolation  of the boundary parametrization $\chi_b$ on the edges ($d=2$) or faces ($d=3$) that are on $\partial \T$.

\begin{figure}[h!]
  \vspace*{-0.2cm}
  \begin{center}
    \includegraphics[width=0.98\textwidth]{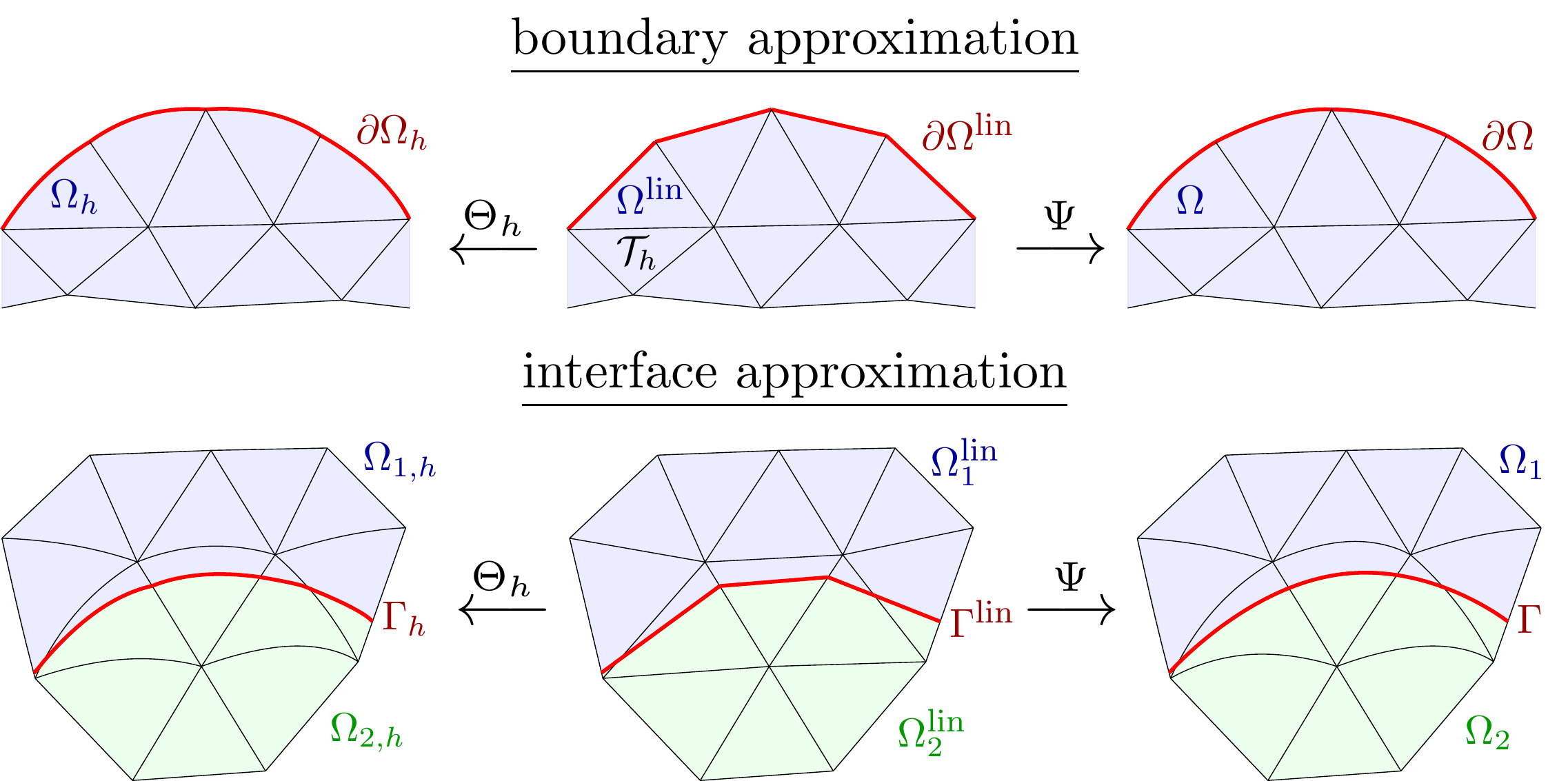} 
  \end{center}
  \vspace*{-0.2cm}
  \caption{Sketch of geometries. The basis is the polygonal approximation (center) of the boundary (top row) and the interface (bottom row). The parametric mapping $\Theta_h$ (left) is used in the discretization method and essential for the higher order accuracy. The mapping $\Psi$ (right) maps the discrete boundary and interface approximations to the exact geometries and is used in the error analysis below.}
  \label{fig:trafos} 
\end{figure}

\subsection{Isoparametric unfitted finite element method} \label{sectmethod}
We introduce the isoparametric unfitted finite element method, based on the isoparametric mapping $\Theta_h$, for the discretization
of  the model elliptic interface problem \eqref{eq:ellmodel}.
We define the isoparametric Nitsche unfitted FEM  as a transformed version of the original Nitsche unfitted FE discretization \cite{hansbo2002unfitted} with respect to the interface approximation $\Gamma_h = \Theta_h(\Gammalin)$.  We introduce some further notation.
The standard unfitted space w.r.t. $\Gammalin$ is denoted by
\[
  V_h^\Gamma:= {V_h^k}_{|\Omega^{\lin}_{h,1}}\oplus {V_h^k}_{|\Omega^{\lin}_{h,2} }.  
\]
To simplify the notation we do not explicitly express the polynomial degree $k$ in $V_h^\Gamma$. 
The \emph{isoparametric unfitted FE space} is defined as
\begin{equation}\label{transfspace}
 V_{h,\Theta}^\Gamma:= \{\, v_h \circ \Theta_h^{-1}~|~ v_h \in V_h^\Gamma\, \}= \{\, \tilde v_h~|~\tilde v_h \circ \Theta_h \in  V_h^\Gamma\, \}.
\end{equation} 
Note that functions from $V_{h,\Theta}^\Gamma$ are defined on the isoparametrically transformed domain $\Omega_h= \Omega_{1,h} \cup \Omega_{2,h} $, with $\Omega_{i,h}:= \Theta_h(\Omega^{\lin}_{i})$.
Based on this space we formulate a discretization of \eqref{eq:ellmodel} using the Nitsche technique \cite{hansbo2002unfitted} with  $\Omega_{i,h}$, $i=1,2$, as numerical approximation of the subdomains $\Omega_i$: determine $ u_h \in V_{h,\Theta}^\Gamma$ such that
\begin{equation} \label{Nitsche1}
 A_h(u_h,v_h) := a_h(u_h,v_h) + N_h(u_h,v_h) = f_h(v_h) \quad \text{for all } v_h \in V_{h,\Theta}^\Gamma,
\end{equation}
with the bilinear and linear forms
\begin{subequations} \label{eq:blfs}
\begin{align}
a_h(u,v) & := \sum_{i=1}^2 \alpha_i \int_{\Omega_{i,h}} \nabla u \cdot \nabla v ~dx,
\\
N_h(u,v) & := N_h^c(u,v) + N_h^c(v,u) + N_h^s(u,v),\\
N_h^c(u,v) & := \int_{\Gamma_h} \average{-\alpha \nabla v} \cdot n \spacejump{u}~ds, \quad 
N_h^s(u,v) := \bar \alpha \frac{\lambda}{h} \int_{\Gamma_h} \spacejump{u} \spacejump{v}~ds,
\end{align}
\end{subequations}
for $u, v \in  V_{h,\Theta}^\Gamma + \Vreg$ with $\Vreg := H^1(\Omega_h) \cap (H^2(\Omega_{1,h}) \cup H^2(\Omega_{2,h}))$. 
Furthermore, $ n = n_{\Gamma_h}$ denotes the outer normal on the boundary $\Gamma_h$ of $\Omega_{1,h}$ and 
$\bar \alpha = \frac12( \alpha_1 + \alpha_2)$ the mean diffusion coefficient. 
For the averaging operator  $\average{\cdot}$ there are different possibilities. We use 
 $\averageleft w \averageright := \kappa_1 w_{|\Omega_{1,h}} + \kappa_2w_{|\Omega_{2,h}} $ with a ``Heaviside'' choice where $\kappa_1 + \kappa_2 =1$ with $\kappa_1 = 1$ if $|T_1| > \frac12 |T|$ and $\kappa_1 = 0$ otherwise. Here, $T_i = T \cap \Omega^{\lin}_{i}$, i.e. the cut configuration on the undeformed mesh is used. This choice in the averaging renders the scheme in \eqref{Nitsche1} stable (for sufficiently large $\lambda$) for arbitrary polynomial degrees $k$, independent of the cut position of $\Gamma$ (Lemma~5.2 in \cite{CLARH1}). A different choice for the averaging which also results in a stable scheme is $\kappa_i = |T_i|/|T|$.

 In order to define  the  right hand side functional $f_h$ we first assume that the source term $f_i :\, \Omega_i \to \Bbb{R}$ in \eqref{eq:ellmodel1} is (smoothly) extended to $\Omega_{i,h}$. This extension is denoted by $f_{i,h}$ and satisfies $f_{i,h}=f_i$ on $\Omega_i$.
 We define $f_h$ on $\Omega$ by ${f_h}_{|\Omega_{i,h}}:= f_{i,h}$, $i=1,2$, hence,
\begin{equation} \label{jk}
  f_h(v) := \int_{\Omega_h} f_h v\, dx = \sum_{i=1,2} \int_{\Omega_{i,h}} f_{i,h} v dx.
\end{equation}

\begin{remark} \rm 
For the implementation of this method we use the same technique as in standard isoparametric finite element methods. In the integrals we apply a transformation of variables $y:=\Theta_h^{-1}(x)$. For example, the bilinear form $a_h(u,v)$ then results in
\begin{equation} \label{atrans}
 a_h(u,v)  := \sum_{i=1,2} \alpha_i \int_{\Omega^{\lin}_{i}}  D\Theta_h^{-T} \nabla u \cdot  D\Theta_h^{-T} \nabla v ~ \det (D\Theta_h)\, dy.
\end{equation}
Based on  this transformation the implementation of integrals is carried out as for the case of the piecewise planar interface $\Gammalin$. The additional variable coefficients $D \thetah^{-T}$, $\det(D \thetah)$  are easily and efficiently computable using the property that $\thetah$ is a finite element (vector) function. 
\end{remark}
\section{Preliminaries for the error analysis}     \label{sectOld}
In this section we collect results from \cite{CLARH1} that we need in the proof of the $L^2$-estimate in Section~\ref{sec:erroranalysis}. In \cite{CLARH1} the case \emph{without} an isoparametric boundary approximation is considered. It turns out that the results that  we need from \cite{CLARH1} also hold for the case with isoparametric boundary approximation, i.e., with the mapping $\thetah$ in \eqref{defthetah}, and that the proofs given in \cite{CLARH1} require only minor modifications. In the proofs below we refer to the analysis given in \cite{CLARH1} and explain only the modifications needed due to the additional isoparametric boundary approximation. 

In the error analysis we use the norm
\begin{align}
 \Vert v \Vert_{h}^2 & := \vert v \vert_{1}^2 + \Vert \jump{v} \Vert_{\frac12,h,\Gamma_h}^2 + \Vert \average{\alpha \nabla v} \Vert_{-\frac12,h,\Gamma_h}^2, \\
\text{ with } \Vert v \Vert_{\pm \frac12,h,\Gamma_h}^2 & := \left(\bar{\alpha} / h \right)^{\pm 1} \Vert v \Vert_{L^2(\Gamma_h)}^2 \text{ and } |v|_1^2:= \sum_{i=1,2} \alpha_i \Vert \nabla v \Vert_{L^2(\Omega_{i,h})}^2. \label{ppp}
\end{align}
Note that the norms are formulated with respect to $\Omega_{i,h} = \thetah(\Omega^{\lin}_{i})$ and  $\Gamma_{h} = \thetah(\Gammalin)$ and include a scaling depending on $\alpha$.
 %
The norm $\|\cdot\|_h$ and the bilinear forms in \eqref{eq:blfs} are well-defined on $V_{h,\Theta}^\Gamma +\Vreg $. 

\begin{lemma}[Ellipticity and continuity] \label{lemcoercive}
  For $\lambda $ sufficiently large the estimates
  \begin{subequations}
\begin{align}
  A_h(u,u) &\gtrsim \Vert u \Vert_h^2 && \hspace*{-2cm} \text{for all}~\, u \in V_{h,\Theta}^\Gamma, \label{eq:coerc} \\
  A_h(u,v) &\lesssim \Vert u \Vert_h \Vert v \Vert_h && \hspace*{-2cm} \text{for all}~\, u,v \in V_{h,\Theta}^\Gamma +\Vreg \label{eq:bound}
\end{align}
\end{subequations}
hold.
\end{lemma}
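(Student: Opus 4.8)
The plan is to pull the whole discrete problem back to the undeformed configuration via $y=\thetah^{-1}(x)$, as in \eqref{atrans}, so that every term in $A_h(\cdot,\cdot)$ and in $\Vert\cdot\Vert_h$ becomes an integral over the (fixed, piecewise planar) sets $\Omega^{\lin}_i$ and $\Gammalin$ with additional factors built from $D\thetah$, $\det(D\thetah)$ and the induced surface-measure ratio on $\Gammalin$. From Lemma~\ref{lem4} together with \eqref{fistderPsi}--\eqref{fistderPsiA} (and, on $\T_1,\T_2$, the extension bounds \eqref{estext1}--\eqref{estext2} applied to the boundary data $\thetahGamma-\id$, $I_k\chi_b$, using \eqref{eq:assumptionchi}) one gets $\Vert D\thetah - I\Vert_{\infty} \lesssim h$ globally. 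Hence, for $h$ small, $\thetah$ is a bi-Lipschitz homeomorphism with $\det(D\thetah)\sim 1$, $\Vert D\thetah\Vert_\infty + \Vert D\thetah^{-1}\Vert_\infty \lesssim 1$, and the surface-measure ratio on $\Gammalin$ uniformly bounded above and below. Therefore the pulled-back norm is equivalent, uniformly in $h$, to the corresponding norm defined directly on $\Omegalin$, $\Gammalin$, and it suffices to establish \eqref{eq:coerc}, \eqref{eq:bound} for the untransformed Nitsche form on the piecewise planar geometry acting on $V_h^\Gamma$ (resp.\ $V_h^\Gamma$ plus the pullback of $\Vreg$).

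On the planar geometry the argument is the classical Nitsche estimate. For \eqref{eq:coerc}: write $A_h(u,u) = |u|_1^2 + 2N_h^c(u,u) + N_h^s(u,u)$, bound the consistency term by Cauchy--Schwarz as $|2N_h^c(u,u)| \le 2\,\Vert\average{\alpha\nabla u}\Vert_{-\frac12,h,\Gammalin}\,\Vert\jump{u}\Vert_{\frac12,h,\Gammalin}$, and then apply Young's inequality. The key estimate is the discrete trace inequality $\Vert\average{\alpha\nabla u}\Vert_{-\frac12,h,\Gammalin}^2 \le C_{\mathrm{tr}}\,|u|_1^2$ for all $u\in V_h^\Gamma$, where $C_{\mathrm{tr}}$ is independent of how $\Gammalin$ cuts the mesh; this is precisely where the ``Heaviside'' averaging $\kappa_i\in\{0,1\}$ (chosen on the undeformed element from the larger of $|T_1|,|T_2|$) is used, following Lemma~5.2 of \cite{CLARH1} (the underlying cut trace inequality going back to \cite{hansbo2002unfitted}). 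Taking $\lambda$ large then absorbs the cross term and leaves $A_h(u,u) \gtrsim |u|_1^2 + \Vert\jump{u}\Vert_{\frac12,h,\Gammalin}^2$; the remaining contribution $\Vert\average{\alpha\nabla u}\Vert_{-\frac12,h,\Gammalin}^2$ to $\Vert u\Vert_h^2$ is itself $\lesssim |u|_1^2$ by the same trace inequality, which closes \eqref{eq:coerc}.

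Continuity \eqref{eq:bound} is then immediate by term-by-term Cauchy--Schwarz, pairing each factor with its matching half-order norm: $|a_h(u,v)| \le |u|_1|v|_1$, $|N_h^c(u,v)| \le \Vert\average{\alpha\nabla v}\Vert_{-\frac12,h,\Gammalin}\Vert\jump{u}\Vert_{\frac12,h,\Gammalin}$ together with the symmetric term, and $|N_h^s(u,v)| \le \Vert\jump{u}\Vert_{\frac12,h,\Gammalin}\Vert\jump{v}\Vert_{\frac12,h,\Gammalin}$, each bounded by $\Vert u\Vert_h\Vert v\Vert_h$. No trace inequality enters here, which is why \eqref{eq:bound} holds on the full space $V_{h,\Theta}^\Gamma + \Vreg$ and not only for finite element functions.

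The hard part is not the Nitsche bookkeeping (which is exactly as in \cite{CLARH1}) but verifying that the geometric bounds on $\thetah$ survive the additional isoparametric boundary approximation built into \eqref{defthetah}. The interface strip $\T^\Gamma$ and its neighbourhood are unchanged, so Lemma~\ref{lem4} applies verbatim; on $\T_1$ the map is of the $\id+\mathcal E_k(\cdot)$ type already analysed; and on $\T_2$ the only new ingredient is the boundary data $I_k\chi_b$, for which \eqref{eq:assumptionchi} gives $\Vert I_k\chi_b\Vert_{\infty,\partial\T} \lesssim h^2$ with uniformly bounded higher derivatives, so that \eqref{estext1}--\eqref{estext2} yield $\Vert D\mathcal E_k(\T_2,(\,\cdot\,,I_k\chi_b))\Vert_\infty \lesssim h$ and bounded second derivatives, matching the interface contribution. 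Hence $D\thetah = I + \mathcal{O}(h)$ globally, and the pullback step above goes through unchanged; this is the single point requiring an argument beyond what is already in \cite{CLARH1}.
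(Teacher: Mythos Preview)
Your proposal is correct and follows essentially the same approach as the paper: pull back via $\thetah^{-1}$ to obtain norm equivalence between the curved and the piecewise planar configurations (this is precisely the content of \cite[Lemma~3.12]{CLARH1} that the paper invokes), and then appeal to the standard Nitsche ellipticity/continuity argument on $\Gammalin$ as in \cite[Lemma~5.2]{CLARH1}. You spell out more detail than the paper does---in particular the Nitsche bookkeeping and the check that the additional boundary data $I_k\chi_b$ does not spoil $D\thetah = I + \mathcal{O}(h)$---but the logical structure is the same.
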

\begin{proof}
The proof given in  \cite[Lemma 5.2]{CLARH1} applies without any modifications. We note that a crucial component in the proof is the assessment that $L^2$ and $H^1$ norms on $\Omega_{i,h}$ and $\Gamma_h$ are equivalent to corresponding norms of mapped functions on $\Omega_{i}^{\lin}$ and $\Gamma^{\lin}$, cf. \cite[Lemma 3.12]{CLARH1}. Due to this the ellipticity and continuity can equivalently be shown on a piecewise linear interface approximation with $\Gammalin$. 
\end{proof}
\ \\[1ex]
In the remainder we assume that $\lambda$ is taken sufficiently large such that the results in Lemma~\ref{lemcoercive} hold. Then the discrete problem \eqref{Nitsche1} has a unique solution. 

A key component  of the error analysis in \cite{CLARH1} is the mapping $\Phi_h:=\Psi \circ \thetah^{-1}$. For the derivation of properties of this mapping we first derive useful results for $\Psi$.

  \begin{theorem} \label{ThmPsi}
   For $\Psi$ the following holds:
   \begin{align} 
    \|\Psi - \id\|_{\infty,\Omega}+ h\|D\Psi - I\|_{\infty,\Omega} & \lesssim h^2, \label{bpsi1} \\ 
    \max_{T\in\T} \Vert D^l \Psi \Vert_{\infty,T} & \lesssim 1, \quad 0 \leq l \leq k+1. \label{bpsi2}
   \end{align}
  \end{theorem}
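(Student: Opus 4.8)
The plan is to analyze $\Psi$ piece by piece according to its definition in \eqref{defpsi}, using the already-established bounds for the local mapping $\Psi^\Gamma$ from Lemma~\ref{lem:boundpsig} together with the extension estimates \eqref{estext1}--\eqref{estext2}. On $\T^\Gamma$ we have $\Psi = \Psi^\Gamma$, so \eqref{bpsi1}--\eqref{bpsi2} restricted to $\OGamma$ are exactly \eqref{fistderPsi}--\eqref{fistderPsiA}. On $\T_1$ we have $\Psi = \id + \E_k\big(\T_1,(\Psi^\Gamma-\id)\big)$, and on $\T_2$ we have $\Psi = \id + \E_k\big(\T_2,(\Psi^\Gamma-\id,\chi_b)\big)$. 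In both cases the deviation $\Psi - \id$ is just an extension of boundary data, so the whole argument reduces to feeding the right bounds on the boundary data into \eqref{estext1}--\eqref{estext2}.

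First I would handle $\T_1$. The boundary data is $g = (\Psi^\Gamma-\id)|_{\partial\T_1}$. By \eqref{fistderPsi}, $\|g\|_{\infty,\partial\T_1}\lesssim h^2$ and $\|Dg\|_{\infty,F}\lesssim h$ on each face $F$, and by \eqref{fistderPsiA}, $\|D^r g\|_{\infty,F}\lesssim 1$ for $2\le r\le k+1$; moreover by \eqref{resd4a0} the vertex values satisfy $|g(x_i)|\lesssim h^{k+1}$. Plugging into \eqref{estext1} with $n=0$ gives $\|\E_k(g)\|_{\infty}\lesssim h^2 + h^{k+1}\lesssim h^2$, and with $n=1$ gives $\|D\E_k(g)\|_\infty \lesssim h + h^{-1}h^{k+1}\lesssim h$ (using $k\ge1$); this yields \eqref{bpsi1} on $\T_1$. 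For the higher derivatives, \eqref{estext2} with $n=2,\dots,k+1$ gives $\max_T\|D^n\E_k(g)\|_{\infty,T}\lesssim \sum_{r=n}^{k+1}h^{r-n}\|D^r g\|_{\infty,F}\lesssim 1$, which is \eqref{bpsi2} on $\T_1$ (the $l=0,1$ cases of \eqref{bpsi2} follow from \eqref{bpsi1}). The argument on $\T_2$ is the same for the $(\Psi^\Gamma-\id)$-component of the data; for the $\chi_b$-component one simply invokes the assumption \eqref{eq:assumptionchi}, which provides exactly $\|\chi_b\|_{\infty,\partial\T}+h\|D\chi_b\|_{\infty,\partial\T}\lesssim h^2$ and $\|D^l\chi_b\|_{\infty,F}\lesssim 1$ for $2\le l\le k+1$, together with $\chi_b(x_i)=0$ at boundary vertices, so the same application of \eqref{estext1}--\eqref{estext2} gives the same bounds. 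Since both pieces of the boundary data on $\partial\T_2$ satisfy matching estimates, their combined extension does too.

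The one point requiring a little care — and the main (though still mild) obstacle — is that these are piecewise estimates on the closed subdomains $\overline{\T^\Gamma}$, $\overline{\T_1}$, $\overline{\T_2}$, whose union is $\Omegalin$, and one must check that they glue to a global statement of the form \eqref{bpsi1}--\eqref{bpsi2}. For \eqref{bpsi1} this is immediate since $\|\cdot\|_{\infty,\Omega}$ is just the max over the pieces, and $\Psi$ is continuous across the interfaces between the pieces by construction (the extension reproduces the boundary data, cf.~\eqref{prop1}, so on $\partial\T^\Gamma\cap\partial\T_i$ both definitions agree with $\Psi^\Gamma$). For \eqref{bpsi2}, which is already stated as an elementwise maximum $\max_{T\in\T}$, no gluing across element faces is needed — $D^l\Psi$ is taken elementwise. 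Thus one only needs that each element $T\in\T$ lies in exactly one of the three groups, which holds by the disjoint decomposition $\T=\T^\Gamma\cup\T_1\cup\T_2$. I would also note in passing that the estimates $\|D^r(\thetahGamma-\Psi^\Gamma)\|\lesssim h^{k+1-r}$ from Lemma~\ref{lem4} are not needed here (they are used later, for $\Phi_h$), and that the whole proof is essentially the one in \cite{CLARH1} with the single addition of tracking the $\chi_b$-contribution via \eqref{eq:assumptionchi}.
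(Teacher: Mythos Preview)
Your proposal is correct and follows essentially the same route as the paper's proof: both treat $\Psi$ piecewise via the decomposition $\T=\T^\Gamma\cup\T_1\cup\T_2$, invoke Lemma~\ref{lem:boundpsig} directly on $\T^\Gamma$, and on $\T_1,\T_2$ feed the boundary data bounds (from \eqref{fistderPsi}, \eqref{fistderPsiA}, \eqref{resd4a0}, and for $\T_2$ additionally \eqref{eq:assumptionchi} with $\chi_b(x_i)=0$) into the extension estimates \eqref{estext1}--\eqref{estext2}. The only cosmetic differences are that the paper writes out the $\T_2$ case (the richer one) and defers $\T_1$ to ``similar arguments,'' and that your extra paragraph on gluing/continuity is not spelled out in the paper but is harmless additional care.
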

  \begin{proof}
  On $\TGamma$  the results are a direct consequence of Lemma~\ref{lem:boundpsig}. On  $\T_1 \cup \T_2$ the result in \eqref{bpsi2}  for $l=0,1$ follows from \eqref{bpsi1} and for $l \geq 2$ it follows from \eqref{estext2} combined with \eqref{fistderPsiA} and \eqref{eq:assumptionchi}. It remains to derive the estimate as in \eqref{bpsi1} on $\T_1 \cup \T_2$. We consider $\T_2$ and write $\E(\Psi^\Gamma -\id, \chi_b):=\mathcal{E}_k\big(\T_2,(\Psi^\Gamma -\id, \chi_b)\big)$. Using \eqref{estext1} and $\chi_b(x_i)=0$ at vertices $x_i \in \partial \T$ we get
 \begin{align*}
   \|\Psi
   - \id \|_{\infty,\T_2} &+ h\|D\Psi - I\|_{\infty,\T_2} \\
 = & \|\mathcal{E}(\Psi^\Gamma -\id, \chi_b)\|_{\infty,\T_2}+ h \|D\mathcal{E}(\Psi^\Gamma -\id, \chi_b)\|_{\infty,\T_2} \\
    \stackrel{\eqref{estext1}}{\lesssim} &  \sum_{r=0}^{k+1} h^{r} \Big( \max_{F \in \F(\partial \T_2 \setminus \partial \T)}\|D^r (\Psi^\Gamma-\id)\|_{\infty,F}+ \max_{F \in \F(\partial \T)}\|D^r \chi_b\|_{\infty,F} \Big) 
 \\ & +  \max_{x_i \in \V(\partial \T_2 \setminus \partial \T)}|(\Psi^\Gamma-\id)(x_i)| \\
   \lesssim
   &
     \underbrace{\|\Psi^\Gamma - \id\|_{\infty,\TGamma}+h  \|D\Psi^\Gamma - I\|_{\infty,\TGamma}}_{\lesssim h^2 ~~ (\text{Lemma}~\ref{lem:boundpsig})}
     + h^2 \underbrace{\max_{2 \leq l \leq k+1} \max_{T \in \TGamma} \|D^l \Psi^\Gamma\|_{\infty,T}}_{\lesssim 1 ~~ (\text{Lemma}~\ref{lem:boundpsig})}
  \\ & + \underbrace{\sum_{r=0}^{k+1} h^{r}\max_{F \in \F(\partial \T)}\|D^r \chi_b\|_{\infty,F}}_{\lesssim h^2 ~~ \eqref{eq:assumptionchi}}+ \underbrace{\max_{x_i \in \V(\TGamma)}|d(x_i)|}_{\lesssim h^2 ~~ (\text{Lemma}~\ref{lem:boundpsig})} \lesssim h^2.
   \end{align*}
For $\T_1$ similar arguments can be applied. 
  \end{proof}\\[1ex]
The proof above generalizes the one given in \cite{CLARH1} since it allows a boundary approximation via the function $g_b(x)=x+ \chi_b(x)$. 

 From \eqref{bpsi1} it follows that, for $h$ sufficiently small, $\Psi$ is a bijection on $\Omega$. Furthermore this mapping induces a family of (curved) finite elements that is regular of order $k$, in the sense as defined in \cite{bernardi1989optimal}. The corresponding  curved finite element space is given by 
  \begin{equation} \label{FEcurvedPsi}
    V_{h,\Psi}:= \{\, v_h \circ \Psi_h^{-1}~|~ v_h \in V_h^k\, \}.
  \end{equation}
Due to the results in Theorem~\ref{ThmPsi} the analysis of the approximation error for this finite element space as developed in \cite{bernardi1989optimal} can be applied. Corollary 4.1 from that paper yields that there exists an interpolation operator $\Pi_h:\, H^{k+1}(\Omega) \to  V_{h,\Psi}$ such that
\begin{equation} \label{errorBernardi}
 \|u- \Pi_h u\|_{L^2(\Omega)}+ h\|u-\Pi_h u\|_{H^1(\Omega)} \lesssim h^{k+1} \|u\|_{H^{k+1}(\Omega)} \quad \text{ for all } u \in H^{k+1}(\Omega).
\end{equation}

For $h$ sufficiently small, the mapping $\Phi_h= \Psi \circ \thetah^{-1}: \, \Omega_h \to \Omega$ is a bijection and has the property $\Phi_h(\Gamma_h)=\Gamma$, cf. Fig.~\ref{fig:trafos}. In the remainder we assume that $h$ is sufficiently small such that $\Phi_h$ is a bijection. It has the smoothness property $\Phi_h \in C(\Omega_h)^d \cap C^{k+1}(\Theta_h(\T))^d$.  In the following lemma we derive  further properties of $\Phi_h$ that will be needed in the error analysis.
\begin{lemma} \label{lem:phihbounds}
 The following holds:
  \begin{align}
    \|\thetah - \Psi\|_{\infty,\Omegalin}+ h\|D(\thetah - \Psi)\|_{\infty,\Omegalin} & \lesssim h^{k+1}, \label{estt1} \\
     \|\Phi_h - \id\|_{\infty,\Omegalin}+ h\|D\Phi_h - I\|_{\infty,\Omegalin} & \lesssim h^{k+1}.\label{estt2} 
  \end{align}
\end{lemma}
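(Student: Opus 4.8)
The plan is to establish \eqref{estt1} first — by treating the three pieces $\T^\Gamma$, $\T_1$, $\T_2$ of the triangulation separately and exploiting the \emph{linearity} of the extension operator $\mathcal{E}_k$ — and then to deduce \eqref{estt2} from \eqref{estt1} via the change of variables $x=\thetah(y)$, $y\in\Omegalin$. Recall that $\Omegalin$ is the disjoint union of $\T^\Gamma$, $\T_1$, $\T_2$, so it suffices to bound $\|\thetah-\Psi\|_{\infty}+h\|D(\thetah-\Psi)\|_{\infty}$ on each of these three sets.

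For \eqref{estt1}: on $\T^\Gamma$ one has $\thetah-\Psi=\thetahGamma-\Psi^\Gamma$ by \eqref{deftheta}, and the required bound (the $r=0$ and $r=1$ contributions) is exactly what Lemma~\ref{lem4} provides. On $\T_1$, linearity of $\mathcal{E}_k(\T_1,\cdot)$ and the definitions \eqref{deftheta} give $\thetah-\Psi=\mathcal{E}_k(\T_1,\thetahGamma-\Psi^\Gamma)$, so I would apply \eqref{estext1} with $n=0,1$: each boundary-face term $h^r\|D^r(\thetahGamma-\Psi^\Gamma)\|_{\infty,F}$ — a face $F\subset\partial\T_1\subset\partial\T^\Gamma$ being a face of some $T\in\TGamma$ — and the vertex term $|(\thetahGamma-\Psi^\Gamma)(x_i)|$ are $\lesssim h^{k+1}$ by Lemma~\ref{lem4}. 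On $\T_2$, linearity again yields $\thetah-\Psi=\mathcal{E}_k\big(\T_2,(\thetahGamma-\Psi^\Gamma,\,I_k\chi_b-\chi_b)\big)$; the boundary data on $\partial\T_2\setminus\partial\T$ is handled just as on $\T_1$, while for the part $I_k\chi_b-\chi_b$ on $\partial\T$ I would invoke the standard nodal interpolation estimate $\|D^r(\chi_b-I_k\chi_b)\|_{\infty,F}\lesssim h^{k+1-r}\|D^{k+1}\chi_b\|_{\infty,F}\lesssim h^{k+1-r}$ for $0\le r\le k+1$, the last bound following from \eqref{eq:assumptionchi}, so that $\sum_{r=0}^{k+1}h^r\|D^r(\chi_b-I_k\chi_b)\|_{\infty,F}\lesssim h^{k+1}$; the associated vertex contribution is zero, since $I_k\chi_b$ interpolates $\chi_b$, and $\chi_b$ itself vanishes, at the vertices of $\partial\T$. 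Plugging these into \eqref{estext1} bounds $\T_2$, and collecting the three pieces proves \eqref{estt1}.

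For \eqref{estt2} I would write $x=\thetah(y)$ and use $\Phi_h=\Psi\circ\thetah^{-1}$ to get $\Phi_h(x)-x=\Psi(y)-\thetah(y)$ and, by the chain rule, $D\Phi_h(x)-I=\big(D\Psi(y)-D\thetah(y)\big)(D\thetah(y))^{-1}$. The zeroth-order estimate is then immediate from \eqref{estt1}. For the first-order estimate, \eqref{estt1} together with Theorem~\ref{ThmPsi} gives $\|D\thetah-I\|_{\infty,\Omegalin}\le\|D(\thetah-\Psi)\|_{\infty,\Omegalin}+\|D\Psi-I\|_{\infty,\Omegalin}\lesssim h^k+h$, so for $h$ sufficiently small $D\thetah$ is pointwise invertible with $\|(D\thetah)^{-1}\|_{\infty,\Omegalin}\le 2$ (Neumann series); combining this with $\|D(\thetah-\Psi)\|_{\infty,\Omegalin}\lesssim h^k$ from \eqref{estt1} yields $\|D\Phi_h-I\|_{\infty}\lesssim h^k$, hence $h\|D\Phi_h-I\|_{\infty}\lesssim h^{k+1}$.

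I expect the only point needing genuine care to be the bookkeeping on $\T_2$: checking that linearity of $\mathcal{E}_k$ may legitimately be applied to the \emph{pair} of boundary data $(\thetahGamma-\Psi^\Gamma,\,I_k\chi_b-\chi_b)$ — which is precisely why both branches of \eqref{deftheta} are built with the same extension operator — and that the interpolation error of $I_k\chi_b$ decays like $h^{k+1}$ under the mild regularity hypothesis \eqref{eq:assumptionchi} rather than at some worse rate. Everything else is a routine application of Lemma~\ref{lem4}, \eqref{estext1} and Theorem~\ref{ThmPsi}.
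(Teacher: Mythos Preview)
Your proposal is correct and follows essentially the same route as the paper: the paper proves \eqref{estt1} by invoking linearity of $\mathcal{E}_k$ and ``arguments very similar to the ones used in the proof of Theorem~\ref{ThmPsi}'' (which is precisely your case split over $\T^\Gamma,\T_1,\T_2$ using Lemma~\ref{lem4}, \eqref{estext1}, and the interpolation error for $I_k\chi_b$), and then derives \eqref{estt2} from $\Phi_h-\id=(\Psi-\thetah)\circ\thetah^{-1}$ together with $\|D\thetah^{-1}\|_{\infty}\lesssim 1$, exactly as you do. Your write-up simply spells out the details the paper leaves implicit.
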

\begin{proof} The estimate \eqref{estt1} follows by using the linearity of the extension operator $\E_k$, cf.~\eqref{deftheta}, and then using argmuments very similar to the ones used in the proof of Theorem~\ref{ThmPsi}.
Note that $\Phi_h-\id=(\Psi-\thetah)\thetah^{-1}$. From  the results in  \eqref{bpsi1} and \eqref{estt1} it follows that $\|\thetah^{-1}\|_{\infty,\Omega_h} \lesssim 1$, $\|D\thetah^{-1}\|_{\infty,\Omega_h} \lesssim 1$. Hence, the estimate in \eqref{estt2} follows from the one in \eqref{estt1}.
\end{proof}
\\[1ex]

We define $\Vregphi :=H^1(\Omega)\cap  H^2(\Omega_1 \cup \Omega_2) $.
Related to  $\Phi_h$ we define the following transformed unfitted finite element space, cf. \eqref{transfspace},
\[ V_{h,\Phi}^\Gamma := \{ v \circ \Phi_h^{-1}, v \in V_{h,\Theta}^\Gamma\} \subset H^1(\Omega_1 \cup \Omega_2),
\]
and the linear and bilinear forms
\begin{equation}
A(u,v) := a(u,v) + N(u,v), \quad f(v)= \int_{\Omega} f v \, dx, \quad u,v \in V_{h,\Phi}^\Gamma + \Vregphi,
\end{equation}
with the bilinear forms $a(\cdot,\cdot)$, $N(\cdot,\cdot)$ as in \eqref{eq:blfs} with $\Omega_{i,h}$ and $\Gamma_h$ replaced with $\Omega_i$ and $\Gamma$, respectively. In the following lemma a consistency result is given.
\begin{lemma}[Consistency] \label{lemconsis}
Let $u\in \Vregphi$ be a solution of \eqref{eq:ellmodel}.  The following holds:
\begin{equation} \label{eq:cons1}
  A(u,v) = f(v) \quad \text{for all}~ \, v \in V_{h,\Phi}^\Gamma + \Vregphi.
\end{equation}
\end{lemma}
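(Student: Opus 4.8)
The plan is to verify that the exact solution $u$ of the interface problem \eqref{eq:ellmodel}, which lives on the \emph{exact} geometry $(\Omega_1,\Omega_2,\Gamma)$, satisfies the Nitsche formulation $A(\cdot,\cdot)$ posed on the exact geometry. This is a pure consistency check: $A$ is nothing but the ``untransformed'' Nitsche bilinear form, i.e.\ the form from \eqref{eq:blfs} with $\Omega_{i,h}$, $\Gamma_h$ replaced by $\Omega_i$, $\Gamma$, so the statement is the standard fact that Nitsche's method is consistent for the PDE it discretizes. The fact that $A$ is applied to test functions from the transformed space $V_{h,\Phi}^\Gamma$ (which need not be continuous across $\Gamma$) is precisely why the Nitsche consistency terms are needed and is harmless.

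First I would fix $v \in V_{h,\Phi}^\Gamma + \Vregphi$ and note that $v|_{\Omega_i} \in H^2(\Omega_i)$, so all the boundary integrals over $\Gamma$ are well defined; also $u \in \Vregphi$ with $\jump{u}_\Gamma = 0$ by \eqref{eq:ellmodel2}. Then, on each subdomain $\Omega_i$, integrate by parts in $a(u,v) = \sum_i \alpha_i \int_{\Omega_i}\nabla u\cdot\nabla v\,dx$, using $-\mathrm{div}(\alpha_i\nabla u) = f_i$ from \eqref{eq:ellmodel1} and $u=0$ on $\partial\Omega$ from the boundary condition. This yields
\[
a(u,v) = \int_\Omega f v\,dx - \int_\Gamma \average{\alpha\nabla u}\cdot n\,\jump{v}\,ds - \int_\Gamma \jump{\alpha\nabla u\cdot n}\,\average{v}\,ds,
\]
where the splitting of the interface term into the jump-of-flux times average-of-$v$ plus average-of-flux times jump-of-$v$ uses the algebraic identity $\jump{ab} = \jump{a}\average{b} + \average{a}\jump{b}$ together with $\kappa_1+\kappa_2=1$ (this identity holds for the complementary-weight averaging used here). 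The flux-jump term vanishes by the transmission condition $\jump{\alpha\nabla u}_\Gamma\cdot n_\Gamma = 0$ in \eqref{eq:ellmodel2}. Next, in $N(u,v) = N^c(u,v) + N^c(v,u) + N^s(u,v)$, the terms $N^c(v,u) = \int_\Gamma \average{-\alpha\nabla v}\cdot n\,\jump{u}\,ds$ and $N^s(u,v) = \bar\alpha\frac{\lambda}{h}\int_\Gamma \jump{u}\jump{v}\,ds$ both vanish since $\jump{u}_\Gamma = 0$, while $N^c(u,v) = \int_\Gamma \average{-\alpha\nabla u}\cdot n\,\jump{v}\,ds$ exactly cancels the remaining interface term from the integration by parts. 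Adding up, $A(u,v) = a(u,v) + N(u,v) = \int_\Omega f v\,dx = f(v)$.

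I do not expect any genuine obstacle here; the only points requiring a little care are (i) checking that the regularity $v|_{\Omega_i}\in H^2(\Omega_i)$ for $v\in V_{h,\Phi}^\Gamma$ — which follows because $\Phi_h$ is a $C^{k+1}$ bijection on each $\Theta_h(T)$ by the smoothness statement for $\Phi_h$ preceding the lemma, so pulling back piecewise-polynomial functions keeps them in $H^2$ on each subdomain — and (ii) making sure the normal $n = n_\Gamma$ and the orientation conventions in $\jump{\cdot}$ and $\average{\cdot}$ are used consistently so that the sign in $N^c(u,v)$ matches the boundary term produced by integration by parts. Both are routine. One can simply remark that the argument is identical to the classical consistency proof for Nitsche's unfitted method (e.g.\ as in \cite{hansbo2002unfitted}), now read on the exact geometry rather than the discrete one.
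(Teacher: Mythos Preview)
Your argument is correct and is exactly the standard Nitsche consistency proof that the paper defers to \cite{CLARH1}; the paper's own proof here consists of a single sentence citing that reference, so your write-up is in fact more detailed than what the paper provides.

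One small correction: functions $v\in V_{h,\Phi}^\Gamma$ are \emph{not} in $H^2(\Omega_i)$ globally (they are only continuous across interior element faces), so your justification in point~(i) is not quite right. This does not affect the proof, however, because the integration by parts only requires $u\in H^2(\Omega_i)$ and $v\in H^1(\Omega_i)$, and the Nitsche terms involving $\nabla v|_\Gamma$ are still well defined since $v$ is elementwise smooth on the curved mesh (and in any case $N^c(v,u)$ and $N^s(u,v)$ vanish due to $\jump{u}_\Gamma=0$).
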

\begin{proof} Proof is given in \cite{CLARH1}. No modifications are needed.
\end{proof}
\\[1ex] 
We introduce the subdomain
\[
 U_{\delta}:=\{\, x\in \Omega~|~{\rm dist}(x,\Gamma) \leq \delta \text{ or } {\rm dist}(x,\partial \Omega) \leq \delta \,\},
\]
with $\delta > 0$ (sufficiently small), consisting of a tubular neigborhood of the interface $\Gamma$ and  a strip adjacent to $\partial \Omega$.
To bound the consistency errors with respect to $A_h$ we use the following result.
\begin{lemma} \label{lem:elliott}
For $\delta > 0$ sufficiently small and $u \in H^1(\Omega_{1} \cup \Omega_{2})$ there holds
\begin{equation}\label{eq:elliott}
  \Vert u \Vert_{L^2(U_\delta)} \leq c \delta^{\frac12} \Vert u \Vert_{H^1(\Omega_{1}\cup\Omega_{2})}.
\end{equation}
\vspace*{-0.25cm}
\end{lemma}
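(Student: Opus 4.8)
The plan is to reduce everything to the model estimate "$L^2$ on a thin tube of width $\delta$ around a smooth hypersurface is controlled by $\delta^{1/2}$ times an $H^1$-norm'', which one proves by integrating along the normal direction. First I would use a density argument: it suffices to prove \eqref{eq:elliott} for $u$ that is smooth up to the boundary on each of $\overline{\Omega}_1$, $\overline{\Omega}_2$ (recall no continuity across $\Gamma$ is required) and then pass to the $H^1(\Omega_1\cup\Omega_2)$-closure. Because ${\rm dist}(\Gamma,\partial\Omega)>0$, for $\delta$ small the set $U_\delta$ splits as a disjoint union of a $\delta$-tube $U_\delta^\Gamma$ around $\Gamma$ and a $\delta$-strip $U_\delta^{\partial\Omega}$ along $\partial\Omega$, and $\|u\|_{L^2(U_\delta)}^2=\|u\|_{L^2(U_\delta^\Gamma)}^2+\|u\|_{L^2(U_\delta^{\partial\Omega})}^2$. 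Moreover $U_\delta^\Gamma\cap\Omega_i$ lies in one subdomain for each $i$, and $U_\delta^{\partial\Omega}$ lies entirely in $\Omega_2$ (since $\overline\Omega_1$ is strictly interior). Hence it is enough to bound $\|u\|_{L^2}$ on a one-sided $\delta$-neighbourhood $U_\delta^S$ of a single smooth hypersurface $S\subset\overline{\Omega_i}$ by $c\,\delta^{1/2}\|u\|_{H^1(\Omega_i)}$.

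For such an $S$ I would fix $\delta_0>0$ so small that the normal map $\Phi_S(\xi,s)=\xi+s\,n_S(\xi)$, $\xi\in S$, $|s|<\delta_0$, is a $C^\infty$-diffeomorphism onto a tubular neighbourhood of $S$ with Jacobian and its inverse uniformly bounded, and so that the parallel surface $S_{*}:=\Phi_S(S\times\{-\delta_0/2\})$ on the $\Omega_i$-side is a smooth hypersurface compactly contained in $\Omega_i$; restrict to $\delta<\delta_0/2$. For smooth $u$, $\xi\in S$, $s\in(-\delta,0)$ the fundamental theorem of calculus along $t\mapsto\Phi_S(\xi,t)$ gives
\[
u(\Phi_S(\xi,s))=u(\Phi_S(\xi,-\tfrac{\delta_0}{2}))+\int_{-\delta_0/2}^{s}\partial_t\big(u\circ\Phi_S\big)(\xi,t)\,dt ,
\]
and then $(a+b)^2\le 2a^2+2b^2$, Cauchy--Schwarz in $t$, and $|\partial_t(u\circ\Phi_S)|\le c\,|\nabla u\circ\Phi_S|$ yield
\[
|u(\Phi_S(\xi,s))|^2\le 2\,|u(\Phi_S(\xi,-\tfrac{\delta_0}{2}))|^2+c\,\delta_0\!\int_{-\delta_0/2}^{0}\!|\nabla u(\Phi_S(\xi,t))|^2\,dt .
\]
Integrating over $\xi\in S$ and $s\in(-\delta,0)$ and changing variables back (bounded Jacobian) gives $\|u\|_{L^2(U_\delta^S\cap\Omega_i)}^2\le c\,\delta\big(\|u\|_{L^2(S_*)}^2+\|\nabla u\|_{L^2(\Omega_i)}^2\big)$, and since $S_*$ is a smooth surface compactly contained in $\Omega_i$ the standard interior trace inequality bounds $\|u\|_{L^2(S_*)}\le c\,\|u\|_{H^1(\Omega_i)}$. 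This disposes of the $\Gamma$-tube, taking $S=\Gamma$ for $i=1,2$ and summing.

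For the strip $U_\delta^{\partial\Omega}\subset\Omega_2$ I would run the identical normal-direction argument on each smooth piece of $\partial\Omega$ after a rigid motion turning that piece into a graph; this is legitimate away from the edges/corners of $\partial\Omega$. Covering $U_\delta^{\partial\Omega}$ by a finite ($\delta$-independent) number of such graph boxes and summing then gives $\|u\|_{L^2(U_\delta^{\partial\Omega})}^2\le c\,\delta\,\|u\|_{H^1(\Omega_2)}^2$; equivalently one can simply cite the analogous known thin-strip estimate valid on any Lipschitz domain. The only delicate point — and the main obstacle — is precisely this passage near the corners/edges of $\partial\Omega$, where the signed distance function is not smooth and the clean normal parametrization breaks down; the finite graph-patch covering circumvents it, at the price of a constant $c=c(\partial\Omega)$. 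Adding the $\Gamma$- and $\partial\Omega$-contributions and taking square roots yields \eqref{eq:elliott}.
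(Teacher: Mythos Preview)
Your argument is correct: the decomposition of $U_\delta$ into the $\Gamma$-tube and the $\partial\Omega$-strip, the one-sided tubular-neighbourhood parametrization, the fundamental-theorem-of-calculus estimate along the normal direction, and the use of an interior trace on the shifted surface $S_*$ together give exactly the $\delta^{1/2}$ gain. Your handling of the piecewise smooth boundary via a finite graph-patch covering (equivalently, invoking the thin-strip estimate on Lipschitz domains) is the right workaround for the lack of a global signed distance near edges/corners.

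There is nothing to compare against: the paper does not prove this lemma but simply cites \cite[Lemma~4.10]{elliott2012finite}. What you have written is essentially the standard proof behind that citation, so you have supplied the details the paper omits. One minor remark: in the cited reference only the $\Gamma$-tube appears (no boundary strip), so your extension to include $U_\delta^{\partial\Omega}$ is a small but necessary addition for the present setting, and your treatment of it is adequate.
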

\begin{proof}
See \cite[Lemma 4.10]{elliott2012finite}.
\end{proof} \\[1ex]
With this result a consistency bound from \cite{CLARH1} can be improved.
\begin{lemma}[Consistency bounds] \label{lemconsist}
Let $u \in  \Vregphi$ be a solution of \eqref{eq:ellmodel}. We assume $f\in H^{1,\infty}(\Omega_1 \cup \Omega_2)$ and a data extension $f_h$, used in \eqref{jk}, that satisfies $\|f_h\|_{H^{1,\infty}(\Omega_{1,h}\cup \Omega_{2,h})} \lesssim \|f\|_{H^{1,\infty}(\Omega_1\cup \Omega_2)}$. The following estimates hold for $w_h \in V_{h,\Theta}^\Gamma + \Vregphih$:
\begin{subequations}
\begin{align}
 |A(u,w_h\circ \Phi_h^{-1}) - A_h(u\circ \Phi_h,w_h)| & \lesssim h^k \Vert u \Vert_{H^2(\Omega_1 \cup \Omega_2)} \Vert w_h \Vert_h \label{p1},
\\
 |f(w_h\circ \Phi_h^{-1})-f_h(w_h)| & \lesssim h^{k+1} \|f\|_{H^{1,\infty}(\Omega_1\cup \Omega_2)} \| w_h \Vert_h. \label{p2}
\end{align}
\end{subequations}
 \end{lemma}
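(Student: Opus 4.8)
The plan is to push everything forward to the fixed discrete domain $\Omega_h$ by the substitution $x=\Phi_h(y)$, under which $w_h\circ\Phi_h^{-1}$ and $u$ become $w_h$ and $u\circ\Phi_h$, so that the two pairs of forms differ only through the geometric factors created by $\Phi_h$. The ingredients I would use are: $\|\Phi_h-\id\|_\infty\lesssim h^{k+1}$ and $\|D\Phi_h-I\|_\infty\lesssim h^{k}$ from Lemma~\ref{lem:phihbounds}, which give $\|D\Phi_h^{\pm1}\|_\infty\lesssim1$ and show that $\det D\Phi_h-1$, $A_\Phi:=D\Phi_h^{-1}D\Phi_h^{-T}\det D\Phi_h-I$, and the quantities produced when a surface integral over $\Gamma$ is pulled back to $\Gamma_h$ (the surface‑measure ratio $\omega_h$ and the difference $n_\Gamma\circ\Phi_h-n_{\Gamma_h}$ of the normals) are all $\mathcal O(h^{k})$ in $L^\infty$; the fact that $\Phi_h-\id=(\Psi-\thetah)\circ\thetah^{-1}$ vanishes away from a fixed number of element layers around $\Gammalin$ and $\partial\T$ — by linearity of $\E_k$ and $\E_k(\hat\T,\cdot)=0$ on $\hat\T^{\rm int}$ — hence is supported in $U_\delta$ with $\delta\sim h$; and the fact that $\Phi_h(\Gamma_h)=\Gamma$ together with $\jump{u}_{\Gamma}=0$ forces $\jump{u\circ\Phi_h}_{\Gamma_h}=0$, so that $N^c(u,\,\cdot\,)$, $N^s(u,\,\cdot\,)$ and their discrete counterparts drop out and only $a(\cdot,\cdot)+N^c(\cdot,u)$ has to be compared with $a_h(\cdot,\cdot)+N_h^c(\cdot,u\circ\Phi_h)$.

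For \eqref{p1} I would essentially reproduce \cite{CLARH1}; the only new feature is that $\Phi_h$ now also deforms the mesh near $\partial\Omega$, but this is harmless since Lemma~\ref{lem:phihbounds} already accounts for the boundary parametrization $\chi_b$. After the substitution the volume part equals $\sum_i\alpha_i\int_{\Omega_{i,h}}(A_\Phi\nabla(u\circ\Phi_h))\cdot\nabla w_h\,dy$, which is $\lesssim h^{k}\|u\|_{H^1(\Omega_1\cup\Omega_2)}\|w_h\|_h$ by $\|A_\Phi\|_\infty\lesssim h^{k}$, $\|D\Phi_h^{\pm1}\|_\infty\lesssim1$ and $|w_h|_1\le\|w_h\|_h$. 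For the interface part I would pull $N^c(w_h\circ\Phi_h^{-1},u)$ back to $\Gamma_h$ and subtract $N_h^c(w_h,u\circ\Phi_h)$; since the ``Heaviside'' average $\average{\cdot}$ is frozen on the undeformed cut mesh it commutes with the pull-back, and one is left with $\int_{\Gamma_h}\average{-\alpha\nabla(u\circ\Phi_h)}\cdot\big(\omega_h D\Phi_h^{-1}(n_\Gamma\circ\Phi_h)-n_{\Gamma_h}\big)\jump{w_h}\,ds$, where the bracketed vector is $\mathcal O(h^{k})$ in $L^\infty$. Estimating this by $h^{k}\|\nabla(u\circ\Phi_h)\|_{L^2(\Gamma_h)}\|\jump{w_h}\|_{L^2(\Gamma_h)}$, using $\|\nabla(u\circ\Phi_h)\|_{L^2(\Gamma_h)}\lesssim\|\nabla u\|_{L^2(\Gamma)}\lesssim\|u\|_{H^2(\Omega_1\cup\Omega_2)}$ (pull back again, use $\|D\Phi_h\|_\infty\lesssim1$, and the trace inequality on $\Omega_i$) and $\|\jump{w_h}\|_{L^2(\Gamma_h)}=(h/\bar\alpha)^{1/2}\|\jump{w_h}\|_{\frac12,h,\Gamma_h}\le(h/\bar\alpha)^{1/2}\|w_h\|_h$, yields \eqref{p1}.

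For \eqref{p2} the new ingredient is Lemma~\ref{lem:elliott}. After the substitution, $f(w_h\circ\Phi_h^{-1})-f_h(w_h)=\int_{\Omega_h}F\,w_h\,dy$ with $F:=(f\circ\Phi_h)\det D\Phi_h-f_h$. On $\Omega_h\setminus U_\delta$ one has $\Phi_h=\id$, $\det D\Phi_h=1$, and $f\circ\Phi_h=f=f_h$, so $F$ is supported in $U_\delta$; on $U_\delta$ I would split $F=(f\circ\Phi_h)(\det D\Phi_h-1)+(f\circ\Phi_h-f_h)$, the first summand being $\mathcal O(h^{k}\|f\|_{H^{1,\infty}})$ by $\|\det D\Phi_h-1\|_\infty\lesssim h^{k}$ and the second $\mathcal O(h^{k+1}\|f\|_{H^{1,\infty}})$ because $\Phi_h(y)\in\Omega_i$ for $y\in\Omega_{i,h}$ gives $f(\Phi_h(y))=f_{i,h}(\Phi_h(y))$ and hence $|f(\Phi_h(y))-f_h(y)|\le\|f_{i,h}\|_{H^{1,\infty}}\|\Phi_h-\id\|_\infty$. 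Thus $\|F\|_{L^\infty(U_\delta)}\lesssim h^{k}\|f\|_{H^{1,\infty}}$, and
\[
 \Big|\int_{\Omega_h}F\,w_h\,dy\Big|\le\|F\|_{L^\infty(U_\delta)}\,|U_\delta|^{1/2}\,\|w_h\|_{L^2(U_\delta\cap\Omega_h)}\lesssim h^{k}\|f\|_{H^{1,\infty}}\,\delta^{1/2}\,\delta^{1/2}\,\|w_h\|_h\lesssim h^{k+1}\|f\|_{H^{1,\infty}}\|w_h\|_h,
\]
where $|U_\delta|\lesssim\delta$ supplies one factor $\delta^{1/2}$ and Lemma~\ref{lem:elliott} (applied to $w_h\circ\Phi_h^{-1}$ and pulled back, using $\|D\Phi_h^{\pm1}\|_\infty\lesssim1$ and a Poincar\'e inequality for $\|\cdot\|_h$) the other, together with $\delta\sim h$.

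The step I expect to be the main obstacle is the careful treatment of the Nitsche surface terms under the pull-back to $\Gamma_h$: one has to expand the transformed co-normal derivative and the surface element, keep track of the averaging operator $\average{\cdot}$, and verify that every geometric mismatch is genuinely $\mathcal O(h^{k})$ and localized in $U_\delta$ — precisely the part that relies on the geometric estimates of \cite{CLARH1}. By contrast the genuinely new point in \eqref{p2} is light: one must keep the localized norm $\|w_h\|_{L^2(U_\delta)}$ instead of bounding it by $\|w_h\|_{L^2(\Omega_h)}$, so that Lemma~\ref{lem:elliott} provides the extra $h^{1/2}$ that turns the naive bound $h^{k+1/2}$ into the optimal $h^{k+1}$.
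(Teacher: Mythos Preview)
Your proposal is correct and follows essentially the same route as the paper. For \eqref{p1} both you and the paper defer to \cite{CLARH1}, using the change of variables $x=\Phi_h(y)$, the bound $\|D\Phi_h^{-1}D\Phi_h^{-T}\det D\Phi_h-I\|_\infty\lesssim h^k$ from Lemma~\ref{lem:phihbounds}, and the vanishing of the Nitsche terms containing $\jump{u\circ\Phi_h}_{\Gamma_h}$. For \eqref{p2} your argument is exactly the paper's: localize to $\Delta_h=U_{\delta_h}$ with $\delta_h\sim h$ (using that $\Phi_h=\id$ outside), split the integrand into a $(\det D\Phi_h-1)$-piece and a $(f_{i,h}\circ\Phi_h-f_{i,h})$-piece, bound the resulting $L^\infty$ factor by $h^k\|f\|_{H^{1,\infty}}$, pick up one $h^{1/2}$ from $|\Delta_h|^{1/2}$ and the second $h^{1/2}$ from Lemma~\ref{lem:elliott} applied to $w_h\circ\Phi_h^{-1}$, exactly as in the paper's display \eqref{eq:gainfactor}.
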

\begin{proof}
In \cite[Lemma 5.13]{CLARH1} equation \eqref{p1} is derived for the case without isoparametric boundary approximation. The proof applies, without modifications, also to the case with isoparametric boundary approximation. In \cite[Lemma 5.13]{CLARH1} a bound as in \eqref{p2} with $h^{k+1}$ replaced by $h^k$ is derived. We now show how one obtains the improved 
bound in  \eqref{p2}.
Define $\Delta_h:=U_{\delta_h}$, which $\delta_h:= ch$ and $c>0$ sufficiently large such that $\Phi_h^{-1} \neq \id$ only on $\Delta_h$. Note that
\begin{align*}
  | f(w_h \circ \Phi_h^{-1})-  f_h(w_h)|& = \left| \int_{\Omega} f (w_h \circ \Phi_h^{-1}) \, dx - \int_{\Omega_h} f_h w_h \, dx \right| \\
  & \leq \sum_{i=1}^2 \left| \int_{\Omega_i} f_i (w_h \circ \Phi_h^{-1}) \, dx - \int_{\Omega_{i,h}} f_{i,h} w_h \, dx \right|.
\end{align*}
For $i=1,2$ and using $f_{i,h}=f_i$ on $\Omega_{i}$ we get:
\begin{align*}
 &  \left| \int_{\Omega_i} f_i (w_h \circ \Phi_h^{-1}) \, dx - \int_{\Omega_{i,h}} f_{i,h} w_h \, dx \right|  = \left| \int_{\Omega_{i,h}} \big(\det(D\Phi_h) (f_{i,h} \circ \Phi_h) - f_{i,h} \big) w_h ~ dx \right|  \\
& \leq  \int_{\Delta_h \cap \Omega_{i,h}} \left| (\det(D\Phi_h) - I) f_{i,h} w_h \right|  dx 
+  \int_{\Delta_h\cap \Omega_{i,h}} \left| \det(D\Phi_h) ( (f_{i,h} \circ \Phi_h) - f_{i,h} ) w_h\right| dx  \\
& \leq |\Delta_h|^{\frac12} ~ \Vert \det(D\Phi_h) - I \Vert_{\infty,\Omega_h} \Vert f_{h} \Vert_{\infty,\Omega_h}  \Vert w_h 
\Vert_{L^2(\Delta_h)}\\
& \quad + |\Delta_h|^{\frac12} ~ \Vert \det(D\Phi_h)\Vert_{\infty,\Omega_h} 
\Vert f_{h} \Vert_{H^{1,\infty}(\Omega_{1,h} \cup \Omega_{2,h})} \Vert \Phi_h - \id \Vert_{\infty,\Omega_h} \Vert w_h \Vert_{L^2(\Delta_h)} \\
& \lesssim h^{k+\frac12} \Vert f \Vert_{H^{1,\infty}(\Omega_{1,h}\cup \Omega_{2,h})} \Vert w_h \Vert_{L^2(\Delta_h)}.
\end{align*}
 In the last inequality we used the properties of the transformation $\Phi_h$ from \eqref{estt2}.
Next, we apply Lemma \ref{lem:elliott} to gain another factor $h^{\frac12}$:
\begin{equation} \label{eq:gainfactor}
  \begin{split}
    \Vert w_h \Vert_{L^2(\Delta_h)} & \lesssim \Vert w_h \circ \Phi_h^{-1} \Vert_{L^2(\Phi_h(\Delta_h))} 
    \leq \Vert w_h \circ \Phi_h^{-1} \Vert_{L^2(U_{\delta_h})} \\
    & \lesssim \delta_h^{\frac12} \Vert w_h \circ \Phi_h^{-1} \Vert_{H^1(\Omega_1\cup\Omega_2)}
    \lesssim h^{\frac12} \Vert w_h \Vert_{H^1(\Omega_{1,h}\cup\Omega_{2,h})}
    \lesssim h^{\frac12} \Vert w_h \Vert_{h}.
  \end{split}
\end{equation}
Combining these results completes the proof. 
\end{proof}
\\[1ex]
The main result of \cite{CLARH1} is given in the following theorem. 
\begin{theorem}[Discretization error bound] \label{mainthmH1}
 Let $u$ be the solution of \eqref{eq:ellmodel} and $u_h \in V_{h,\Theta}^\Gamma$  the solution of \eqref{Nitsche1}.We assume that  $u\in H^{k+1}(\Omega_1 \cup \Omega_2)$ and the data extension $f_h$ satisfies the condition in Lemma~\ref{lemconsist}.  Then the following holds:
\begin{align}\label{main1}
\Vert u\circ \Phi_h - u_h \Vert_h & \lesssim h^k( \| u \|_{H^{k+1}(\Omega_1 \cup \Omega_2)} + \|f\|_{H^{1,\infty}(\Omega_1\cup\Omega_2)}),
  \\
|u- u_h \circ \Phi_h^{-1}|_{H^1(\Omega_1 \cup \Omega_2)} &\lesssim h^k( \| u \|_{H^{k+1}(\Omega_1 \cup \Omega_2)}+ \|f\|_{H^{1,\infty}(\Omega_1\cup \Omega_2)}) \label{cor1}.
\end{align}
\end{theorem}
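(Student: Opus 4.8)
The plan is to combine the ellipticity and continuity from Lemma~\ref{lemcoercive}, the consistency results from Lemmas~\ref{lemconsis} and~\ref{lemconsist}, and the interpolation bound~\eqref{errorBernardi} in a standard Strang-type argument, but carried out on the \emph{exact} domain $\Omega$ via the mapping $\Phi_h$ and then transported back. First I would set $w_h := u_h\circ\Phi_h \in V_{h,\Theta}^\Gamma$ and write the error as $e_h := u\circ\Phi_h - w_h$. Since $u\circ\Phi_h$ is not in the discrete space, I would split $e_h = (u\circ\Phi_h - \Pi_h^\Phi u) + (\Pi_h^\Phi u - w_h)$, where $\Pi_h^\Phi u := (\Pi_h u)\circ\Phi_h^{-1}\!\circ\Phi_h = (\Pi_h u)\circ\Psi\circ\thetah^{-1}$ is the image under the $\Phi_h$-pullback of the Bernardi interpolant $\Pi_h u \in V_{h,\Psi}$; note $\Pi_h^\Phi u \in V_{h,\Theta}^\Gamma$ by construction. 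Denote the discrete part $\delta_h := \Pi_h^\Phi u - w_h \in V_{h,\Theta}^\Gamma$.

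The core estimate is for $\|\delta_h\|_h$. Using ellipticity~\eqref{eq:coerc}, $\|\delta_h\|_h^2 \lesssim A_h(\delta_h,\delta_h) = A_h(\Pi_h^\Phi u - u\circ\Phi_h,\delta_h) + A_h(u\circ\Phi_h - w_h,\delta_h)$. For the second term, $A_h(w_h,\delta_h)=f_h(\delta_h)$ by the discrete equation~\eqref{Nitsche1}, so $A_h(u\circ\Phi_h,\delta_h)-A_h(w_h,\delta_h) = A_h(u\circ\Phi_h,\delta_h)-f_h(\delta_h)$; I would then insert the exact-domain identity $A(u,\delta_h\circ\Phi_h^{-1})=f(\delta_h\circ\Phi_h^{-1})$ from Lemma~\ref{lemconsis} and bound the differences $|A(u,\delta_h\circ\Phi_h^{-1})-A_h(u\circ\Phi_h,\delta_h)|$ and $|f(\delta_h\circ\Phi_h^{-1})-f_h(\delta_h)|$ by~\eqref{p1} and~\eqref{p2}, giving a contribution $\lesssim h^k(\|u\|_{H^{k+1}}+\|f\|_{H^{1,\infty}})\|\delta_h\|_h$. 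For the first term I use continuity~\eqref{eq:bound}: $|A_h(\Pi_h^\Phi u - u\circ\Phi_h,\delta_h)| \lesssim \|\Pi_h^\Phi u - u\circ\Phi_h\|_h\|\delta_h\|_h$, and here I would argue that the norm equivalence between quantities on $\Omega_{i,h}$/$\Gamma_h$ and their $\Phi_h$-mapped counterparts on $\Omega_i$/$\Gamma$ (as in \cite[Lemma 3.12, 3.13]{CLARH1}, using the bounds~\eqref{estt2} on $\Phi_h$) lets me replace $\|\Pi_h^\Phi u - u\circ\Phi_h\|_h$ by $\lesssim \|u - \Pi_h u\|_{\ast}$, where $\|\cdot\|_\ast$ is the analogous norm on the exact geometry; the $H^1$-seminorm part is then controlled by~\eqref{errorBernardi}, while the interface terms $\|\jump{u-\Pi_h u}\|_{\pm1/2,h,\Gamma}$ require a trace/scaled-trace inequality on the band of cut elements, again yielding $\lesssim h^k\|u\|_{H^{k+1}}$. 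Dividing by $\|\delta_h\|_h$ gives $\|\delta_h\|_h\lesssim h^k(\|u\|_{H^{k+1}}+\|f\|_{H^{1,\infty}})$; combining with the interpolation bound on $\|u\circ\Phi_h-\Pi_h^\Phi u\|_h$ via the triangle inequality proves~\eqref{main1}. Finally~\eqref{cor1} follows from~\eqref{main1} because $|u-u_h\circ\Phi_h^{-1}|_{H^1(\Omega_1\cup\Omega_2)} \le |e_h|_1$-type quantities are equivalent under the $\Phi_h$-pullback (bounded Jacobians from~\eqref{estt2}), and $|\cdot|_1 \le \|\cdot\|_h$.

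The main obstacle I anticipate is the careful bookkeeping of the geometric-consistency step: matching the bilinear forms $A$ on $\Omega$ and $A_h$ on $\Omega_h$ through $\Phi_h$, ensuring that the interpolant $\Pi_h^\Phi u$ genuinely lies in $V_{h,\Theta}^\Gamma$, and verifying that all norm equivalences used (on both volume and interface terms) hold uniformly in $h$ with the higher-order \emph{boundary} approximation present — i.e.\ that the arguments of \cite[Lemma 3.12]{CLARH1}, originally for a polygonal $\Omega$, survive when $\thetah$ also bends the outer boundary. Given Theorem~\ref{ThmPsi} and Lemma~\ref{lem:phihbounds}, this should go through with only the ``minor modifications'' the authors repeatedly invoke, but it is the place where a naive argument could hide a genuine gap, especially near $\partial\Omega$ where $\Phi_h\neq\id$.
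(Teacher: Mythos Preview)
Your overall Strang-type structure --- split the error into an approximation part and a discrete part, control the discrete part via ellipticity, the discrete equation, the exact-domain consistency identity of Lemma~\ref{lemconsis}, and the geometry-consistency bounds \eqref{p1}--\eqref{p2} --- is exactly the argument the paper invokes from \cite{CLARH1}, and your derivation of \eqref{cor1} from \eqref{main1} via the $\Phi_h$-norm equivalence is also the paper's route. (There are harmless notational slips: $u_h$ is already in $V_{h,\Theta}^\Gamma$, so you should set $w_h:=u_h$, not $u_h\circ\Phi_h$; and your $\Pi_h^\Phi u$ is $(\Pi_h u)\circ\Phi_h$, not $(\Pi_h u)\circ\Phi_h^{-1}\circ\Phi_h$.)

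There is, however, one genuine gap: you cannot apply the Bernardi interpolant $\Pi_h$ from \eqref{errorBernardi} directly to the interface solution $u$. That bound requires $u\in H^{k+1}(\Omega)$, whereas the solution of \eqref{eq:ellmodel} is only in $H^{k+1}(\Omega_1\cup\Omega_2)$; across $\Gamma$ the normal flux is continuous but $\nabla u$ is not, so in general $u\notin H^2(\Omega)$ and a \emph{continuous} interpolant cannot deliver $h^k$ accuracy in $\|\cdot\|_h$. Concretely, your appeal ``the $H^1$-seminorm part is then controlled by~\eqref{errorBernardi}'' fails. The fix, which is precisely what the paper packages as the approximation property \eqref{apppr}, is to take Sobolev (Stein) extensions $E_i u_{|\Omega_i}\in H^{k+1}(\Omega)$, apply $\Pi_h$ to each $E_i u$, and assemble a \emph{broken} interpolant in $V_{h,\Theta}^\Gamma$ from the two pieces; then \eqref{errorBernardi} applies to each $E_i u$ and, together with the usual scaled trace inequalities on cut elements, yields $\|u\circ\Phi_h - v_h\|_h\lesssim h^k\|u\|_{H^{k+1}(\Omega_1\cup\Omega_2)}$. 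With this corrected choice of interpolant your Strang argument goes through and coincides with the paper's proof.
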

\begin{proof}
The result \eqref{cor1} easily follows from \eqref{main1} using the definition of the norm $\|\cdot\|_h$ and properties of $\Phi_h$. 
 The proof of \eqref{main1} in \cite{CLARH1} applies with only minor modifications. A key ingredient in the proof is the following approximation property, which holds for arbitrary $u\in H^{k+1}(\Omega_1 \cup \Omega_2)$:
\begin{equation} \label{apppr}
\inf_{v_h \in V_{h,\Theta}^\Gamma} \|u \circ \Phi _h - v_h\|_h \lesssim h^k \|u\|_{H^{k+1}(\Omega_1 \cup \Omega_2)},
 \end{equation}
 which is based on the isoparametric interpolation error bound \eqref{errorBernardi}. 
\end{proof}

\section{$L^2$-error bound}\label{sec:erroranalysis}
In this section we present an $L^2$-norm discretization error bound for the unfitted finite element method in \eqref{Nitsche1}. 
The key ingredients are a duality argument, the already available $H^1$-error bound given in Theorem~\ref{mainthmH1} and the consistency error bound \eqref{p1}-\eqref{p2}.

In the remainder we assume that the smoothness conditions on the solution $u$ and the right hand side $f$ as formulated in Theorem~\ref{mainthmH1} are satisfied. We define the discretization error $e_h:=u- u_h\circ \Phi_h^{-1} \in H^1(\Omega_1 \cup \Omega_2)$ and consider the (adjoint) problem:
\begin{subequations} \label{eq:ellmodeladj}
\begin{align}
- \mathrm{div} (\alpha_i \nabla {z}) &= \,e_h  
\quad \text{in}~~ \Omega_i , ~i=1,2, \label{eq:ellmodel1adj} \\
\spacejump{{\alpha} \nabla {z} }_{\Gamma} \cdot n_\Gamma &= \, 0, \quad \spacejump{{z}}_{\Gamma} = 0 \quad \text{on}~~\Gamma, \label{eq:ellmodel2adj} \\
z &= 0 \quad \text{ on } \partial \Omega.
 \end{align}
\end{subequations}
We assume that $\Gamma$ and $\partial \Omega$ are sufficiently smooth such that the following regularity property holds:
\begin{equation} 
 \|z\|_{H^2(\Omega_1 \cup \Omega_2)} \lesssim \|e_h\|_{L^2(\Omega)},  \label{reg11} \\
\end{equation}
cf.~\cite{Huang2002,Graham2010,ladyzhenskaya1973linear}.  From Lemma~\ref{lemconsis} we obtain the identity $\|e_h\|_{L^2(\Omega)}^2 = A(z,e_h)$. The $L^2$-error analysis is based on the following splitting, with $z_h \in V_{h,\Theta}^\Gamma$:
\begin{subequations} \label{terms}
\begin{align}
 \|e_h\|_{L^2(\Omega)}^2 &= A(z,e_h)  \nonumber \\
 &= A(z,e_h)-A_h(z \circ \Phi_h, e_h \circ \Phi_h) \label{term1} \\
 & \quad + A_h(z \circ \Phi_h- z_h, e_h \circ \Phi_h) \label{term2} \\
 & \quad + A_h(u\circ \Phi_h, z_h)- A(u, z_h \circ \Phi_h^{-1}) \label{term3} \\
 & \quad + f(z_h\circ \Phi_h^{-1})- f_h(z_h). \label{term4}
\end{align}
\end{subequations}
In the lemmas below we derive bounds for the terms in \eqref{term1}-\eqref{term4}.
\begin{lemma} \label{lemT1}
 The estimate
\[
  \left|A(z,e_h)-A_h(z \circ \Phi_h, e_h \circ \Phi_h) \right| \lesssim h^{2k} \big(\|u\|_{H^{k+1}(\Omega_1 \cup \Omega_2)}+\|f\|_{H^{1,\infty}(\Omega_1\cup \Omega_2)}\big) \|e_h\|_{L^2(\Omega)}
\]
holds.
\end{lemma}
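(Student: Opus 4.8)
The plan is to estimate the difference of the two bilinear forms term by term, exploiting that the integrands of $A$ and $A_h$ differ only through the transformation $\Phi_h$, whose deviation from the identity is $O(h^{k+1})$ by Lemma~\ref{lem:phihbounds}, and — crucially — is supported only in the narrow strip $\Delta_h := U_{\delta_h}$ with $\delta_h = ch$. Writing $A(z,e_h) - A_h(z\circ\Phi_h, e_h\circ\Phi_h)$ as a sum of three contributions coming from the bulk form $a(\cdot,\cdot)$, the consistency/Nitsche term $N^c(\cdot,\cdot)$ (and its symmetric counterpart) and the stabilization term $N^s(\cdot,\cdot)$, I would transform each $A_h$-integral back to the exact geometry via $x = \Phi_h(y)$, so that every term becomes an integral over $\Omega_1\cup\Omega_2$ (resp. $\Gamma$) against difference factors built from $I - D\Phi_h^{-T}$, $\det(D\Phi_h) - 1$, and the analogous surface-measure and normal-vector defects. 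All of these geometric defect factors are $O(h^{k+1})$ in $L^\infty$ and vanish outside $\Delta_h$.

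The key quantitative step is then a Cauchy–Schwarz estimate in which one factor carries a power $h^{k+1}$ from the geometric defect and the second factor is an $L^2$-norm of $z$-derivatives and of $e_h$-derivatives \emph{restricted to $\Delta_h$}. For $z$ this restriction is harmless: $\|\nabla z\|_{L^2(\Omega_1\cup\Omega_2)} \lesssim \|z\|_{H^2(\Omega_1\cup\Omega_2)} \lesssim \|e_h\|_{L^2(\Omega)}$ by the elliptic regularity assumption \eqref{reg11}. For $e_h$ I would use Lemma~\ref{lem:elliott} exactly as in the proof of the consistency bound \eqref{p2}: since $e_h = u - u_h\circ\Phi_h^{-1} \in H^1(\Omega_1\cup\Omega_2)$, one has $\|e_h\|_{L^2(\Delta_h)} \lesssim \delta_h^{1/2}\|e_h\|_{H^1(\Omega_1\cup\Omega_2)} \lesssim h^{1/2}\cdot h^k(\|u\|_{H^{k+1}(\Omega_1\cup\Omega_2)} + \|f\|_{H^{1,\infty}(\Omega_1\cup\Omega_2)})$, the last step invoking the $H^1$-error bound \eqref{cor1} of Theorem~\ref{mainthmH1}. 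Multiplying the $h^{k+1}$ geometric factor (or $h^{1/2}$ factors from measure/normal defects on the interface together with an $h^{1/2}$ from Lemma~\ref{lem:elliott}, plus the $h^k$ from the $H^1$-error) against the $O(\|e_h\|_{L^2})$ bound on the $z$-side gives $h^{k+1}\cdot h^{k-1/2}\cdot h^{1/2}$-type products, i.e. at least $h^{2k}$, and the claimed bound $h^{2k}(\|u\|_{H^{k+1}} + \|f\|_{H^{1,\infty}})\|e_h\|_{L^2(\Omega)}$ follows after collecting the worst term.

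I expect the main obstacle to be bookkeeping rather than anything conceptually new: one must track the interface contributions carefully, because there the geometric defects in the surface measure and in the normal field are only $O(h^{k+1})$ pointwise but the Nitsche terms carry negative powers of $h$ (the $\bar\alpha\lambda/h$ in $N^s$ and the $\|\cdot\|_{-1/2,h,\Gamma}$-type scaling), so one has to verify that the $h^{1/2}$ gains from Lemma~\ref{lem:elliott} applied on $U_{\delta_h}\cap\Gamma$-neighborhoods together with trace/inverse inequalities on $\Delta_h$ still leave a net power $h^{2k}$. This is essentially the same mechanism already used in \cite{CLARH1} for the $H^1$-analysis and in Lemma~\ref{lemconsist} above; here the improvement to $h^{2k}$ (rather than $h^{2k-1/2}$ or $h^k$) comes precisely from the extra $\delta_h^{1/2} = O(h^{1/2})$ that Lemma~\ref{lem:elliott} supplies on the $e_h$-side, combined with the fact that $e_h$ itself is already $O(h^k)$ in $H^1$. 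Routine geometric perturbation estimates for $\Phi_h$ (difference of pulled-back normals and surface measures) I would simply cite from \cite{CLARH1}, noting as elsewhere that the isoparametric boundary approximation requires no change since $\Phi_h - \id$ is supported away from $\partial\Omega_1 = \Gamma$ in the part relevant to the interface terms and is $O(h^{k+1})$ near $\partial\Omega$.
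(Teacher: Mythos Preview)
Your approach is workable in spirit but takes a much longer route than necessary, and a few of the bookkeeping details are off. The paper's proof is a three-line application of the already-established consistency bound \eqref{p1}: since $e_h\circ\Phi_h = u\circ\Phi_h - u_h \in V_{h,\Theta}^\Gamma + V_{\text{reg},h}$, one applies \eqref{p1} with $z$ in the role of $u$ and $w_h = e_h\circ\Phi_h$, obtaining
\[
|A(z,e_h) - A_h(z\circ\Phi_h, e_h\circ\Phi_h)| \lesssim h^k \|z\|_{H^2(\Omega_1\cup\Omega_2)}\,\|e_h\circ\Phi_h\|_h.
\]
Then $\|z\|_{H^2}\lesssim\|e_h\|_{L^2}$ by regularity \eqref{reg11}, and $\|e_h\circ\Phi_h\|_h = \|u\circ\Phi_h - u_h\|_h \lesssim h^k(\|u\|_{H^{k+1}}+\|f\|_{H^{1,\infty}})$ by \eqref{main1}. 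The product gives $h^{2k}$ directly; no strip localization on the $e_h$ side and no term-by-term expansion of $A$ versus $A_h$ is needed.

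What you propose is essentially to re-derive \eqref{p1} from scratch (transform integrals, bound geometric defect factors, localize to $\Delta_h$), which is redundant since that work is already packaged in Lemma~\ref{lemconsist}. Two details in your sketch are also imprecise: the Jacobian-level defects $D\Phi_h - I$ and $\det(D\Phi_h)-1$ are $O(h^k)$, not $O(h^{k+1})$, by \eqref{estt2}; and Lemma~\ref{lem:elliott} applied to $e_h$ controls $\|e_h\|_{L^2(\Delta_h)}$, not the gradient norm $\|\nabla e_h\|_{L^2(\Delta_h)}$ that actually appears in the bulk difference (the latter would require $e_h\in H^2$, which fails since $u_h\circ\Phi_h^{-1}$ is only piecewise smooth). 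Neither slip is fatal --- the $h^{2k}$ still comes out as $h^k$ from the consistency defect times $h^k$ from the $H^1$-error without any extra $h^{1/2}$ gain on the $e_h$ side --- but the cleanest route is simply to invoke \eqref{p1} as a black box, exactly as the paper does.
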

\begin{proof}
Note that $e_h \circ \Phi_h \in V_{h,\Theta}^\Gamma + V_{{\rm reg},h}$. We use \eqref{p1} with $w_h= e_h \circ \Phi_h$, $z$ instead of $u$ and combine this with the regulariy estimate \eqref{reg11} and  the result in Theorem~\ref{mainthmH1}. This yields
\begin{align*}
  \left|A(z,e_h)-A_h(z \circ \Phi_h, e_h \circ \Phi_h) \right| & \lesssim h^k \|z\|_{H^2(\Omega_1 \cup \Omega_2)} \|e_h \circ \Phi_h\|_h \\
 & \lesssim h^{2k} \big(\|u\|_{H^{k+1}(\Omega_1 \cup \Omega_2)}+\|f\|_{H^{1,\infty}(\Omega_1\cup \Omega_2)}\big) \|e_h\|_{L^2(\Omega)},
\end{align*}
which completes the proof. 
\end{proof}
\ \\[1ex]
In the terms \eqref{term2}-\eqref{term4} we need a suitable $z_h \in V_{h,\Theta}^\Gamma$. For this we take $z_h \in V_{h,\Theta}^\Gamma$ such that the following holds, cf.~\eqref{apppr}:
\begin{equation} \label{zh}
 \|z \circ \Phi_h -z_h\|_h \lesssim h \|z\|_{H^2(\Omega_1 \cup \Omega_2)} \lesssim h \|e_h\|_{L^2(\Omega)}.
\end{equation}

\begin{lemma} \label{lemT2}
 Let $z_h$ be as in \eqref{zh}.  The estimate 
\[
  \left| A_h(z \circ \Phi_h- z_h, e_h \circ \Phi_h)\right|  \lesssim h^{k+1} \big(\|u\|_{H^{k+1}(\Omega_1 \cup \Omega_2)}+\|f\|_{H^{1,\infty}(\Omega_1\cup \Omega_2)}\big) \|e_h\|_{L^2(\Omega)} 
\]
holds.
\end{lemma}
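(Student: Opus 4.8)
The plan is to pair the two arguments of $A_h$ via the continuity estimate of Lemma~\ref{lemcoercive} and then to absorb one factor of $h$ from the adjoint approximation $z_h$ and a factor $h^k$ from the already-established primal $H^1$-bound.

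First I would check that both arguments lie in the space on which \eqref{eq:bound} is valid. Since $z\in\Vregphi$ and $\Phi_h$ has the smoothness recorded in Lemma~\ref{lem:phihbounds} (with $\Phi_h(\Gamma_h)=\Gamma$), the composition $z\circ\Phi_h$ belongs to $\Vreg$, so $z\circ\Phi_h-z_h\in V_{h,\Theta}^\Gamma+\Vreg$. Likewise, using $e_h\circ\Phi_h=u\circ\Phi_h-u_h\circ\Phi_h^{-1}\circ\Phi_h=u\circ\Phi_h-u_h$ and $u\in\Vregphi$, we get $e_h\circ\Phi_h\in V_{h,\Theta}^\Gamma+\Vreg$ (as already noted in the proof of Lemma~\ref{lemT1}). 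Applying \eqref{eq:bound} then gives
\[
 \left|A_h(z\circ\Phi_h-z_h,\,e_h\circ\Phi_h)\right|\lesssim \|z\circ\Phi_h-z_h\|_h\,\|e_h\circ\Phi_h\|_h .
\]

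Next I would estimate the two factors separately. The first factor is controlled directly by the defining property \eqref{zh} of $z_h$, yielding $\|z\circ\Phi_h-z_h\|_h\lesssim h\,\|e_h\|_{L^2(\Omega)}$. For the second factor I use the identity $e_h\circ\Phi_h=u\circ\Phi_h-u_h$ together with the $H^1$-error bound \eqref{main1} of Theorem~\ref{mainthmH1}, which gives $\|e_h\circ\Phi_h\|_h=\|u\circ\Phi_h-u_h\|_h\lesssim h^k\big(\|u\|_{H^{k+1}(\Omega_1\cup\Omega_2)}+\|f\|_{H^{1,\infty}(\Omega_1\cup\Omega_2)}\big)$. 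Multiplying the two bounds produces the asserted factor $h^{k+1}$.

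I do not expect any genuine obstacle here: the statement is essentially a Cauchy--Schwarz-type pairing in the $\|\cdot\|_h$ norm in which one side carries an extra power of $h$ coming from the first-order adjoint approximation and the other side carries $h^k$ from the already-proved primal convergence. The only points requiring (routine) care are the verification that $z\circ\Phi_h$ and $u\circ\Phi_h$ belong to $\Vreg$ so that the continuity estimate applies, and the bookkeeping identity $e_h\circ\Phi_h=u\circ\Phi_h-u_h$.
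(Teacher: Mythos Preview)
Your proposal is correct and follows exactly the paper's argument: apply the continuity estimate \eqref{eq:bound}, bound $\|z\circ\Phi_h-z_h\|_h$ by \eqref{zh}, and bound $\|e_h\circ\Phi_h\|_h=\|u\circ\Phi_h-u_h\|_h$ by \eqref{main1}. You are in fact slightly more explicit than the paper in checking that both arguments lie in $V_{h,\Theta}^\Gamma+\Vreg$.
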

\begin{proof}
Using the continuity result in \eqref{eq:bound}, the result in Theorem~\ref{mainthmH1} and the estimate \eqref{zh} we obtain
\begin{align*}
 \left| A_h(z \circ \Phi_h- z_h, e_h \circ \Phi_h)\right| & \lesssim h^k\|z \circ \Phi_h- z_h\|_h  \big(\|u\|_{H^{k+1}(\Omega_1 \cup \Omega_2)}+\|f\|_{H^{1,\infty}(\Omega_1\cup \Omega_2)}\big) \\
 & \lesssim h^{k+1} \big(\|u\|_{H^{k+1}(\Omega_1 \cup \Omega_2)}+\|f\|_{H^{1,\infty}(\Omega_1\cup \Omega_2)}\big) \|e_h\|_{L^2(\Omega)} .
\end{align*}
\end{proof}
\ \\[1ex]
\begin{lemma} \label{lemT3}
 Let $z_h$ be as in \eqref{zh}. The estimate 
\begin{equation} \label{kk}
  |A_h(u\circ \Phi_h, z_h)- A(u, z_h \circ \Phi_h^{-1})| \lesssim h^{k+1} \|u\|_{H^2(\Omega_1 \cup \Omega_2)}  \|e_h\|_{L^2(\Omega)}
\end{equation}
holds.
\end{lemma}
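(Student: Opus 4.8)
The plan is to introduce the dual solution $z$ into the second argument. Set $\rho:=z_h\circ\Phi_h^{-1}-z$, so that $\rho\circ\Phi_h=z_h-z\circ\Phi_h$ and, by \eqref{zh} and \eqref{reg11}, $\|\rho\circ\Phi_h\|_h\lesssim h\|z\|_{H^2(\Omega_1\cup\Omega_2)}\lesssim h\|e_h\|_{L^2(\Omega)}$. Since $\rho\circ\Phi_h+z\circ\Phi_h=z_h$ and $\rho+z=z_h\circ\Phi_h^{-1}$, we may write
\begin{align*}
 A_h(u\circ\Phi_h,z_h)-A(u,z_h\circ\Phi_h^{-1})
 & = \big[A_h(u\circ\Phi_h,\rho\circ\Phi_h)-A(u,\rho)\big] \\
 & \quad + \big[A_h(u\circ\Phi_h,z\circ\Phi_h)-A(u,z)\big] =: S_1 + S_2.
\end{align*}
For $S_1$ I would apply the consistency estimate \eqref{p1} with $w_h=\rho\circ\Phi_h\in V_{h,\Theta}^\Gamma+\Vregphih$ (and the primal solution $u$ in the first slot), which gives $|S_1|\lesssim h^k\|u\|_{H^2(\Omega_1\cup\Omega_2)}\|\rho\circ\Phi_h\|_h\lesssim h^{k+1}\|u\|_{H^2(\Omega_1\cup\Omega_2)}\|e_h\|_{L^2(\Omega)}$, i.e.\ already of the asserted order.

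The essential part is the term $S_2$, in which both arguments are smooth. Because $u$ and $z$ are continuous across $\Gamma$ (cf.\ \eqref{eq:ellmodel2} and \eqref{eq:ellmodel2adj}) and $\Phi_h$ is a homeomorphism with $\Phi_h(\Gamma_h)=\Gamma$, the compositions $u\circ\Phi_h$ and $z\circ\Phi_h$ are continuous on $\Omega_h$; hence $\spacejump{u\circ\Phi_h}=\spacejump{z\circ\Phi_h}=0$ on $\Gamma_h$ and $\spacejump{u}=\spacejump{z}=0$ on $\Gamma$, so every Nitsche contribution in $N_h(u\circ\Phi_h,z\circ\Phi_h)$ and in $N(u,z)$ vanishes. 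Therefore $S_2=a_h(u\circ\Phi_h,z\circ\Phi_h)-a(u,z)$. Transforming $a_h$ to the undeformed geometry by the substitution $y=\Phi_h^{-1}(x)$ and using $\nabla_x(v\circ\Phi_h)=D\Phi_h^{T}(\nabla v\circ\Phi_h)$, this becomes $S_2=\sum_{i=1}^{2}\alpha_i\int_{\Omega_i}\big((\bM_h-I)\nabla u\big)\cdot\nabla z\,dy$ with $\bM_h:=\big(D\Phi_h\,D\Phi_h^{T}\det(D\Phi_h)^{-1}\big)\circ\Phi_h^{-1}$. By \eqref{estt2} one has $\|\bM_h-I\|_{\infty,\Omega}\lesssim h^k$; and, decisively, $\bM_h-I$ vanishes wherever $\Phi_h=\id$ — by construction of $\Theta_h$ and $\Psi$ this holds outside an $O(h)$-neighbourhood of $\Gamma\cup\partial\Omega$ — so $\bM_h-I$ is supported in $U_{ch}$ for a fixed $c>0$.

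It remains to gain one extra power of $h$ over the naive $O(h^k)$ estimate of $S_2$, and this is where Lemma~\ref{lem:elliott} enters, applied to the gradients. Since $u\in H^2(\Omega_1\cup\Omega_2)$ and, by \eqref{reg11}, $z\in H^2(\Omega_1\cup\Omega_2)$, the components of $\nabla u$ and $\nabla z$ lie in $H^1(\Omega_1\cup\Omega_2)$, so $\|\nabla u\|_{L^2(U_{ch})}\lesssim h^{1/2}\|u\|_{H^2(\Omega_1\cup\Omega_2)}$ and $\|\nabla z\|_{L^2(U_{ch})}\lesssim h^{1/2}\|z\|_{H^2(\Omega_1\cup\Omega_2)}$. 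Using in addition $\|\bM_h-I\|_{\infty,\Omega}\lesssim h^k$, the localization of its support, and once more \eqref{reg11}, we obtain $|S_2|\lesssim h^{k}\cdot h^{1/2}\|u\|_{H^2(\Omega_1\cup\Omega_2)}\cdot h^{1/2}\|e_h\|_{L^2(\Omega)}=h^{k+1}\|u\|_{H^2(\Omega_1\cup\Omega_2)}\|e_h\|_{L^2(\Omega)}$; combining with the bound for $S_1$ yields \eqref{kk}. The main obstacle is precisely this $S_2$ estimate: as $\|D\Phi_h-I\|_{\infty}$ is only $O(h^k)$, a direct bound of the geometric volume error falls one power short, and the missing power can be recovered only by exploiting both the $O(h)$-localization of the error near $\Gamma\cup\partial\Omega$ and the $H^2$-regularity of the primal and the dual solution; one must also check carefully that all Nitsche terms in $S_2$ genuinely cancel, leaving only the volume contribution.
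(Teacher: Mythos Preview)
Your argument is correct and essentially identical to the paper's: the same splitting $S_1+S_2$ (the paper writes it as \eqref{ll1}+\eqref{ll2}), the same use of \eqref{p1} together with \eqref{zh} for $S_1$, and for $S_2$ the same reduction to a volume term via $\spacejump{u}=\spacejump{z}=0$, the bound $\|\bM_h-I\|_{\infty}\lesssim h^k$ with support in $U_{ch}$, and Lemma~\ref{lem:elliott} applied to $\nabla u$ and $\nabla z$ to gain the extra $h^{1/2}$ from each factor. The only cosmetic difference is your matrix $\bM_h$ versus the paper's $C=\det(B^{-1})B^TB-I$, $B=D\Phi_h$, which encode the same geometric perturbation.
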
 
\begin{proof}
We use the splitting
\begin{align}
 & A_h(u\circ \Phi_h, z_h)- A(u, z_h \circ \Phi_h^{-1}) \nonumber \\
& = A_h(u\circ \Phi_h, z_h- z\circ \Phi_h)- A(u, z_h \circ \Phi_h^{-1}- z) \label{ll1} \\
 & + A_h(u\circ \Phi_h,z\circ \Phi_h)- A(u,z).\label{ll2}
\end{align}
For the term in \eqref{ll1} we use the approximation result \eqref{p1} and \eqref{zh}:
\begin{align*}
 & |A_h(u\circ \Phi_h, z_h- z\circ \Phi_h)- A(u, z_h \circ \Phi_h^{-1}- z)| \lesssim h^k\|u\|_{H^2(\Omega_1 \cup \Omega_2)}\|z_h- z\circ\Phi_h\|_h \\
 & \lesssim  h^{k+1}\|u\|_{H^2(\Omega_1 \cup \Omega_2)} \|e_h\|_{L^2(\Omega)}.
\end{align*}
For the term in \eqref{ll2} we use $\jump{u}=\jump{z}=0$ and thus get:
\begin{align*}
  & A_h(u\circ \Phi_h,z\circ \Phi_h)- A(u,z) \\ & = \sum_{i=1}^2 \alpha_i \Big( \int_{\Omega_{i,h}} \nabla(u\circ \Phi_h) \cdot \nabla(z\circ \Phi_h)\, dx - \int_{\Omega_i} \nabla u \cdot \nabla z \, dx \Big) \\
& = \sum_{i=1}^2 \alpha_i \int_{\Omega_i} C \nabla u \cdot \nabla z \, dx,
\end{align*}
with the matrix $C:=\det(B^{-1})B^TB -I,~B:=D\Phi_h$. We have $C \neq 0$ only on $\Delta_h:=U_{\delta_h}$, with $\delta_h =ch$, for suitable $c >0$. Furthermore, $\|C\|_{\infty,\Omega} \lesssim h^k$ holds, cf.~\cite[Proof of Lemma 5.6]{CLARH1}. Using this and  Lemma~\ref{lem:elliott} we obtain
\begin{align*}
 |A_h(u\circ \Phi_h,z\circ \Phi_h)- A(u,z)| & \lesssim h^k \|u\|_{H^1(\Delta_h)}\|z\|_{H^1(\Delta_h)}  \\ & \lesssim h^{k+1} \|u\|_{H^2(\Omega_1\cup \Omega_2)}  \|z\|_{H^2(\Omega_1\cup \Omega_2)} \\
  & \lesssim  h^{k+1} \|u\|_{H^2(\Omega_1\cup \Omega_2)}\|e_h\|_{L^2(\Omega)}.
\end{align*}
Combing these results completes the proof.
\end{proof}
\ \\[1ex]
\begin{lemma} \label{lemT4}
 Let $z_h$ be as in \eqref{zh}. The estimate
\begin{equation} \label{ll}
  |f(z_h\circ \Phi_h^{-1})-f_h(z_h)|  \lesssim h^{k+1} \|f\|_{H^{1,\infty}(\Omega_1\cup \Omega_2)}\|e_h\|_{L^2(\Omega)} \end{equation}
holds.
\end{lemma}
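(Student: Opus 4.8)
The plan is to obtain Lemma~\ref{lemT4} as an essentially immediate corollary of the consistency bound \eqref{p2} from Lemma~\ref{lemconsist}, applied with the test function $w_h=z_h$, the only additional work being to control the resulting factor $\|z_h\|_h$. Since $z_h\in V_{h,\Theta}^\Gamma\subset V_{h,\Theta}^\Gamma+\Vregphih$, estimate \eqref{p2} is directly applicable and yields
\[
 |f(z_h\circ \Phi_h^{-1})-f_h(z_h)| \lesssim h^{k+1}\,\|f\|_{H^{1,\infty}(\Omega_1\cup \Omega_2)}\,\|z_h\|_h .
\]
Thus it remains to show $\|z_h\|_h \lesssim \|e_h\|_{L^2(\Omega)}$.

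For this I would split $z_h=(z_h-z\circ\Phi_h)+z\circ\Phi_h$ and estimate the two contributions separately. The first is handled directly by the approximation property \eqref{zh}, giving $\|z_h-z\circ\Phi_h\|_h\lesssim h\,\|z\|_{H^2(\Omega_1\cup\Omega_2)}\lesssim h\,\|e_h\|_{L^2(\Omega)}$. For the second I would use that, by Lemma~\ref{lem:phihbounds}, $\Phi_h$ is $O(h^{k+1})$-close to the identity in $W^{1,\infty}$, so the norm-equivalence results between quantities on $\Omega_{i,h},\Gamma_h$ and on $\Omega^{\lin}_i,\Gammalin$ from \cite[Lemma~3.12]{CLARH1} apply; combined with a standard multiplicative trace inequality to absorb the $\Gamma_h$-half-norm terms in $\|\cdot\|_h$ into an $H^2$-norm, this gives $\|z\circ\Phi_h\|_h\lesssim \|z\|_{H^2(\Omega_1\cup\Omega_2)}$, which by the regularity estimate \eqref{reg11} is $\lesssim \|e_h\|_{L^2(\Omega)}$. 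Adding the two pieces yields $\|z_h\|_h\lesssim (1+h)\|e_h\|_{L^2(\Omega)}\lesssim \|e_h\|_{L^2(\Omega)}$, and substituting this into the displayed inequality completes the proof.

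There is no real obstacle here; the only mildly technical point is the bound $\|z\circ\Phi_h\|_h\lesssim \|z\|_{H^2(\Omega_1\cup\Omega_2)}$, which hinges on the norm-equivalence machinery of \cite{CLARH1} plus a trace inequality. If one prefers to avoid even this, one can instead start from the intermediate estimate $h^{k+\frac12}\|f\|_{H^{1,\infty}(\Omega_1\cup\Omega_2)}\|z_h\|_{L^2(\Delta_h)}$ that appears in the proof of \eqref{p2}, and then bound $\|z_h\|_{L^2(\Delta_h)}$ by the triangle inequality: $\|z_h-z\circ\Phi_h\|_{L^2(\Delta_h)}$ is controlled via \eqref{zh}, while $\|z\circ\Phi_h\|_{L^2(\Delta_h)}\lesssim \|z\|_{L^2(U_{\delta_h})}\lesssim \delta_h^{1/2}\|z\|_{H^1(\Omega_1\cup\Omega_2)}$ by Lemma~\ref{lem:elliott}; this again produces a factor $h^{1/2}\|e_h\|_{L^2(\Omega)}$ and hence the same $h^{k+1}$ bound.
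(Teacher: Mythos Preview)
Your proof is correct and follows essentially the same route as the paper: apply \eqref{p2} with $w_h=z_h$, then bound $\|z_h\|_h$ by the triangle inequality $\|z_h\|_h\le\|z_h-z\circ\Phi_h\|_h+\|z\circ\Phi_h\|_h$, using \eqref{zh} for the first term and $\|z\circ\Phi_h\|_h\lesssim\|z\|_{H^2(\Omega_1\cup\Omega_2)}\lesssim\|e_h\|_{L^2(\Omega)}$ for the second. The paper simply asserts the last bound without spelling out the trace/norm-equivalence argument you supply, so your version is slightly more detailed but otherwise identical.
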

\begin{proof}
 Using \eqref{p2} we get
\[
 |f(z_h\circ \Phi_h^{-1})-f_h(z_h)\|  \lesssim h^{k+1} \|f\|_{H^{1,\infty}(\Omega_1\cup \Omega_2)} \|z_h\|_h
\]
Furthermore, using \eqref{zh} we get 
\[
 \|z_h\|_h  \leq \|z_h-z\circ \Phi_h\|_h + \|z\circ \Phi_h\|_h   \lesssim h\|e_h\|_{L^2(\Omega)}  +\|z\|_{H^2(\Omega_1 \cup \Omega_2)} 
  \lesssim \|e_h\|_{L^2(\Omega)}.
\]
Thus we obtain the  bound in \eqref{ll}.
\end{proof}
\ \\[2ex]
Combining the previous results  we obtain the following main result.
\begin{theorem}[$L^2$-error bound] \label{mainthmL2} We assume that the smoothness conditions on the solution $u$ and on the data $f,f_h$ as formulated in Theorem~\ref{mainthmH1} are satisfied and that $\Gamma$ and $\partial \Omega$ are sufficiently smooth such that we have the regularity property \eqref{reg11} for the solution of the problem \eqref{eq:ellmodeladj}. The following bound holds for the discretization error: 
\begin{equation}\label{mainl2}
\begin{split}
\Vert u - u_h \circ \Phi_h^{-1} \Vert_{L^2(\Omega)} & \simeq \Vert u \circ \Phi_h - u_h \Vert_{L^2(\Omega_h)}  \\ & \lesssim h^{k+1} (\|u\|_{H^{k+1}(\Omega_1 \cup \Omega_2)} + \|f\|_{H^{1,\infty}(\Omega_1\cup \Omega_2)}).
\end{split} \end{equation}
\end{theorem}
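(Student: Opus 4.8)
The plan is to run the Aubin--Nitsche duality argument that has been set up in the paragraph preceding the theorem. One starts from the identity $\|e_h\|_{L^2(\Omega)}^2 = A(z,e_h)$, which follows from the consistency result of Lemma~\ref{lemconsis} applied to the solution $z$ of the adjoint problem \eqref{eq:ellmodeladj}, and inserts the four-term decomposition \eqref{terms}. This decomposition is designed so that each summand is exactly of a form that one of the auxiliary results handles: the mismatch between the continuous bilinear form on $\Omega$ and the discretized one on $\Omega_h$ (controlled via the geometric consistency bound \eqref{p1} and the $H^1$-estimate of Theorem~\ref{mainthmH1}), the Galerkin-type near-orthogonality after subtracting an interpolant $z_h$ of $z$, and the data-perturbation term controlled by \eqref{p2}.

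Concretely: first fix $z_h \in V_{h,\Theta}^\Gamma$ satisfying \eqref{zh}, whose existence is guaranteed by the approximation property \eqref{apppr} (used with polynomial degree one) applied to $z\in H^2(\Omega_1\cup\Omega_2)$, together with the regularity bound \eqref{reg11}. Then estimate the four terms \eqref{term1}, \eqref{term2}, \eqref{term3}, \eqref{term4} by Lemmas~\ref{lemT1}, \ref{lemT2}, \ref{lemT3}, \ref{lemT4} respectively. Adding these gives
\[
 \|e_h\|_{L^2(\Omega)}^2 \lesssim \big(h^{2k} + h^{k+1}\big)\big(\|u\|_{H^{k+1}(\Omega_1\cup\Omega_2)} + \|f\|_{H^{1,\infty}(\Omega_1\cup\Omega_2)}\big)\,\|e_h\|_{L^2(\Omega)} .
\]
Since $k\ge 1$ we have $2k\ge k+1$, so $h^{2k}\lesssim h^{k+1}$; dividing by $\|e_h\|_{L^2(\Omega)}$ (the case $e_h=0$ being trivial) yields the claimed bound for $\|u-u_h\circ\Phi_h^{-1}\|_{L^2(\Omega)}=\|e_h\|_{L^2(\Omega)}$. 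The stated equivalence $\|u-u_h\circ\Phi_h^{-1}\|_{L^2(\Omega)}\simeq\|u\circ\Phi_h-u_h\|_{L^2(\Omega_h)}$ is then obtained by the change of variables $x=\Phi_h(y)$, using that $\det(D\Phi_h)\sim 1$ uniformly in $h$, which is a consequence of \eqref{estt2}.

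I expect the theorem itself to be essentially a matter of assembly, so the genuine difficulties lie one level down, in the lemmas that are assumed: the sharpened $O(h^{k+1})$ data-consistency estimate \eqref{p2} in Lemma~\ref{lemconsist}, obtained by trading the $O(h^k)$ deviation of $\Phi_h$ from the identity against the extra $h^{1/2}$ from the thin-strip bound of Lemma~\ref{lem:elliott}, and the bound in Lemma~\ref{lemT3} on the geometric error matrix $C=\det(B^{-1})B^TB-I$, $B=D\Phi_h$, which must be shown to be $O(h^k)$ in $L^\infty$ and supported only in the $O(h)$-neighbourhood $\Delta_h$ of $\Gamma\cup\partial\Omega$. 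Within the assembly, the only point needing attention is the bookkeeping that the dominant contribution among the four terms is the $O(h^{k+1})$ one (from terms \eqref{term2}--\eqref{term4}), not the $O(h^{2k})$ one from \eqref{term1}, which pins the global rate at $k+1$.
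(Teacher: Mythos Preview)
Your proposal is correct and follows exactly the paper's approach: the theorem is stated as an immediate consequence of combining the four-term splitting \eqref{terms} with Lemmas~\ref{lemT1}--\ref{lemT4}, together with the change of variables using $\det(D\Phi_h)\sim 1$ for the norm equivalence. The paper itself simply records the theorem as the result of ``combining the previous results,'' so your assembly is precisely what is intended.
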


\section{Numerical experiment} \label{sec:numex}
Results of numerical experiments that confirm the optimal $h^{k+1}$ convergence in the $L^2$-norm for the interface problem \eqref{eq:ellmodel} are given in \cite{lehrenfeld15,CLARH1}. However, In these examples the domain boundary is polygonal. In the experiment presented in this section we consider a case where the \emph{curved} domain boundary is approximated by isoparametric elements.

The domain is a disk around the origin with radius $R=2$, $\Omega = B_2((0,0))$. The domain is divided into an inner disk $\Omega_1=B_1((0,0))$ and an outer ring $\Omega_2=\Omega\setminus\Omega_1$. Hence, the interface is a circle described with the level set function $\phi(x) = \Vert x \Vert_2 -1$,  $\Gamma = \{ \phi(x) = 0 \}$. We approximate $\phi$ by  $\phi_h \in V_h^k$ by interpolation.
We set $(\alpha_1,\alpha_2) = (\pi,2)$ and take the right hand-side  $f$ such that the exact solution is given by (with $r(x) = \Vert x \Vert_2$)
\begin{equation} \label{eq:numexu}
  u(x) = \left\{ \begin{array}{rl} 2+ (\cos(\frac{\pi}{2}r(x))-1) & x\in\Omega_1, \\ 2 - r(x)   & x\in\Omega_2. \end{array} \right.
\end{equation}
Note that $u$ fulfills the interface conditions. For the Nitsche stabilization parameter we choose $\lambda = 20 k^2$ and measure the $L^2$ error on the curved mesh $\Omega_h$ which is obtained by applying the isoparametric mapping $\Theta_h$.
To measure the errors we use domain-wise the canonical extension of $u$ leading to $u^e$ as in \eqref{eq:numexu} with $\Omega_i$ replaced by $\Omega_{i,h}$.
Starting from a coarse initial mesh, see Figure \ref{fig:numex2d} (top left), we apply three succesive mesh refinements and use polynomial degrees $k=1,..,5$.
To solve the arising linear systems we used a direct solver.
The method has been implemented in the add-on library \texttt{ngsxfem} to the finite element library \texttt{NGSolve} \cite{schoeberl2014cpp11}.

The results are shown in Figure \ref{fig:numex2d}. We observe the predicted optimal $\mathcal{O}(h^{k+1})$ behavior.

\begin{figure}[h!]
\hspace*{0.05\textwidth}
\begin{minipage}{0.35\textwidth}
\begin{center}
\includegraphics[width=0.95\textwidth]{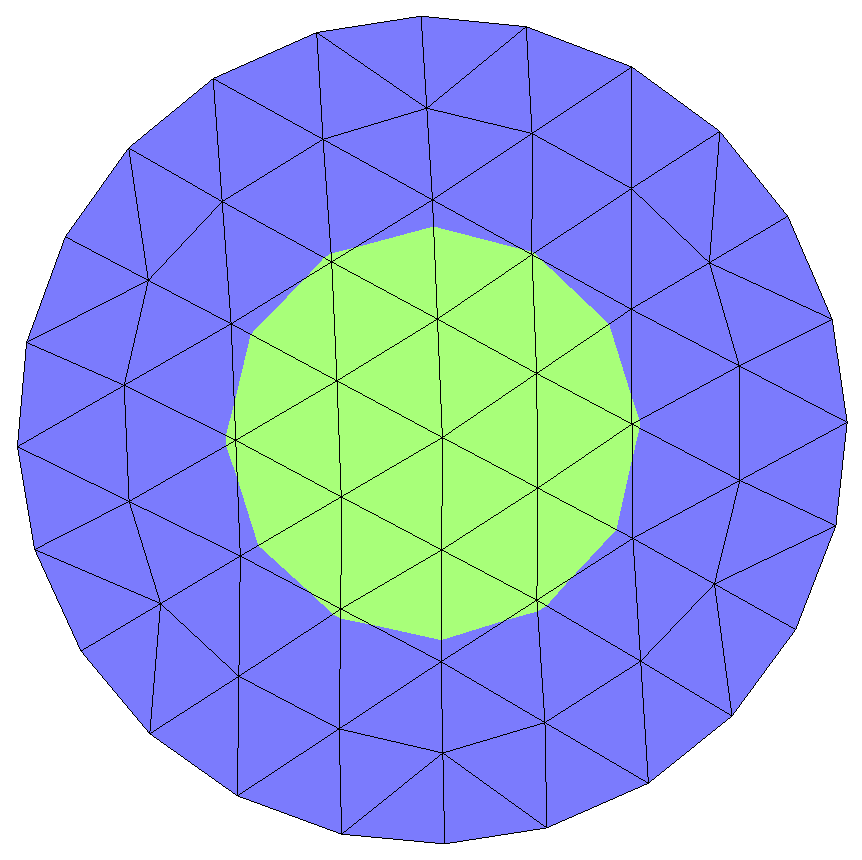} \\[1ex]
\includegraphics[width=0.95\textwidth]{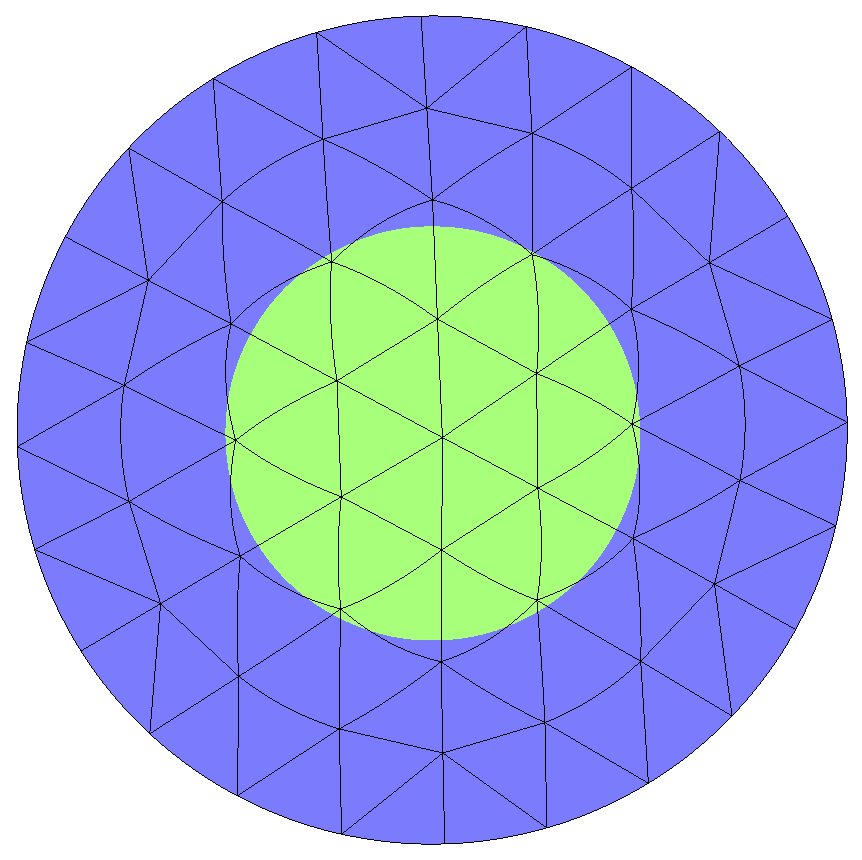}
\end{center}
\end{minipage}
\hspace*{0.12\textwidth}
\begin{minipage}{0.35\textwidth}
\small
\begin{center}
\begin{tabular}{r@{\ \ }r@{\ \ \ }
  l@{\ (}c@{)\ \ \ }
}
$k$ & $L$ & $\Vert u^e - u_h\Vert_{L^2(\Omega_h)}$ & eoc\\
\toprule
1 & 0 & \num{ 7.36800414e-02} &   - \\
  & 1 & \num{ 1.90253353e-02} & 1.95 \\
  & 2 & \num{ 4.72040148e-03} & 2.01 \\
  & 3 & \num{ 1.17707175e-03} & 2.00 \\
\midrule
2 & 0 & \num{  5.53387584e-03} &   - \\
  & 1 & \num{  5.85824788e-04} & 3.24 \\
  & 2 & \num{  6.38353911e-05} & 3.20 \\
  & 3 & \num{  7.70314676e-06} & 3.05 \\
\midrule
3 & 0 & \num{ 6.73584972e-04} &   - \\
  & 1 & \num{ 2.71968655e-05} & 4.63 \\
  & 2 & \num{ 1.39476599e-06} & 4.29 \\
  & 3 & \num{ 7.50056132e-08} & 4.22 \\
\midrule
4 & 0 & \num{ 9.31770985e-05} &   - \\
  & 1 & \num{ 1.28501095e-06} & 6.18 \\
  & 2 & \num{ 2.62992882e-08} & 5.61 \\
  & 3 & \num{ 6.24838787e-10} & 5.40 \\
\midrule
5 & 0 & \num{ 1.82853094e-05} &   - \\
  & 1 & \num{ 8.57739109e-08} & 7.74 \\
  & 2 & \num{ 8.38408929e-10} & 6.68 \\
  & 3 & \num{ 1.12690397e-11} & 6.22 \\
\bottomrule
\end{tabular}
\end{center}
\end{minipage}
\hspace*{0.1\textwidth}
\caption{Initial mesh with approximated geometry configuration $\Omegalin$, $\Omegalin_{i}$ (top left) and isoparametrically mapped domain for $k=2$ (bottom left). The table shows $L^2$-norm discretization errors on  successively refined meshes and estimated orders of convergence.}
\label{fig:numex2d}
\end{figure}

\vspace*{-0.2cm}
\section*{Acknowledgement}
C. Lehrenfeld gratefully acknowledges funding by the German Science Foundation (DFG) within the project ``LE 3726/1-1''.
\vspace*{-0.2cm}
  
\bibliographystyle{siam}
\bibliography{literature}

\begin{thebibliography}{10}

\bibitem{bastian2009unfitted}
{\sc P.~Bastian and C.~Engwer}, {\em An unfitted finite element method using
  discontinuous {Galerkin}}, International journal for numerical methods in
  engineering, 79 (2009), pp.~1557--1576.

\bibitem{bernardi1989optimal}
{\sc C.~Bernardi}, {\em Optimal finite-element interpolation on curved
  domains}, SIAM J. Numer. Anal., 26 (1989), pp.~1212--1240.

\bibitem{boiveau2016fictitious}
{\sc T.~Boiveau, E.~Burman, S.~Claus, and M.~G. Larson}, {\em Fictitious domain
  method with boundary value correction using penalty-free nitsche method},
  arXiv preprint 1610.04482,  (2016).

\bibitem{BrennerScott}
{\sc S.~Brenner and L.~Scott}, {\em The Mathematical Theory of Finite Element
  Methods}, Springer New York, 1994.

\bibitem{burman2015cut}
{\sc E.~Burman, P.~Hansbo, and M.~G. Larson}, {\em A cut finite element method
  with boundary value correction}, arXiv preprint 1507.03096,  (2015).

\bibitem{cheng2010higher}
{\sc K.~W. Cheng and T.-P. Fries}, {\em Higher-order {XFEM} for curved strong
  and weak discontinuities}, Int. J. Num. Meth. Eng., 82 (2010), pp.~564--590.

\bibitem{Graham2010}
{\sc C.-C. Chu, I.~Graham, and T.-Y. Hou}, {\em A new multiscale finite element
  method for high-contrast elliptic interface problems}, Math. of Computation,
  79 (2010), pp.~1915--1955.

\bibitem{deckelnick14}
{\sc K.~Deckelnick, C.~Elliott, and T.~Ranner}, {\em {Unfitted Finite Element
  Methods using bulk meshes for surface partial differential equations}}, SIAM
  Journal on Numerical Analysis, 52 (2014), pp.~2137--2162.

\bibitem{dreau2010studied}
{\sc K.~Dr{\'e}au, N.~Chevaugeon, and N.~Mo{\"e}s}, {\em Studied {X-FEM}
  enrichment to handle material interfaces with higher order finite element},
  Comput. Meth. Appl. Mech. Eng., 199 (2010), pp.~1922--1936.

\bibitem{elliott2012finite}
{\sc C.~M. Elliott and T.~Ranner}, {\em Finite element analysis for a coupled
  bulk--surface partial differential equation}, IMA Journal of Numerical
  Analysis, 33 (2013), pp.~377--402.

\bibitem{ernguermond15}
{\sc A.~Ern and J.-L. Guermond}, {\em Finite element quasi-interpolation and
  best approximation}, ESAIM:M2AN, 51 (2017), pp.~1367--1385.

\bibitem{fries2015}
{\sc T.-P. Fries and S.~Omerović}, {\em Higher-order accurate integration of
  implicit geometries}, Int. J. Num. Meth. Eng.,  (2015).

\bibitem{GLR_ARXIV_2016}
{\sc J.~Grande, C.~Lehrenfeld, and A.~Reusken}, {\em Analysis of a high order
  trace finite element method for {PDE}s on level set surfaces}, Preprint
  arXiv:1611.01100,  (2016).
\newblock NAM preprint 2016-05. Accepted for publication in SIAM J. Numer.
  Anal. (2017).

\bibitem{grande2014highorder}
{\sc J.~Grande and A.~Reusken}, {\em A higher order finite element method for
  partial differential equations on surfaces}, SIAM J. Numer. Anal., 54 (2016),
  pp.~388--414.

\bibitem{gross04}
{\sc S.~{Gro\ss}, V.~Reichelt, and A.~Reusken}, {\em A finite element based
  level set method for two-phase incompressible flows}, Comput. Visual. Sci., 9
  (2006), pp.~239--257.

\bibitem{grossreusken07}
{\sc S.~{Gro\ss} and A.~Reusken}, {\em An extended pressure finite element
  space for two-phase incompressible flows}, J. Comput. Phys., 224 (2007),
  pp.~40--58.

\bibitem{hansbo2002unfitted}
{\sc A.~Hansbo and P.~Hansbo}, {\em An unfitted finite element method, based on
  {Nitsche’s} method for elliptic interface problems}, Computer Methods in
  Applied Mechanics and Engineering, 191 (2002), pp.~5537--5552.

\bibitem{Huang2002}
{\sc J.~Huang and J.~Zou}, {\em Some new a priori estimates for second-order
  elliptic and parabolic interface problems}, Journal of Differential
  Equations, 184 (2002), pp.~570--586.

\bibitem{kummer13}
{\sc F.~Kummer and M.~Oberlack}, {\em {An extension of the discontinuous
  Galerkin method for the singular poisson equation}}, SIAM Journal on
  Scientific Computing, 35 (2013), pp.~A603--A622.

\bibitem{ladyzhenskaya1973linear}
{\sc O.~Ladyzhenskaya and N.~Ural'tseva}, {\em Linear and quasilinear equations
  of elliptic type}, Nauka, Moscow, 1973.

\bibitem{LPWL_PAMM_2016}
{\sc P.~L. Lederer, C.-M. Pfeiler, C.~Wintersteiger, and C.~Lehrenfeld}, {\em
  Higher order unfitted {FEM for Stokes} interface problems}, PAMM, 16 (2016),
  pp.~7--10.

\bibitem{lehrenfeld15}
{\sc C.~Lehrenfeld}, {\em High order unfitted finite element methods on level
  set domains using isoparametric mappings}, Computer Methods in Applied
  Mechanics and Engineering, 300 (2016), pp.~716 -- 733.

\bibitem{L_ARXIV_2016}
{\sc C.~Lehrenfeld}, {\em A higher order isoparametric fictitious domain method
  for level set domains}, arXiv preprint arXiv:1612.02561,  (2016).

\bibitem{CLARH1}
{\sc C.~Lehrenfeld and A.~Reusken}, {\em Analysis of a high order unfitted
  finite element method for elliptic interface problems}, IMA J. Numer. Anal.,
  (2017).
\newblock drx041, https://doi.org/10.1093/imanum/drx04.

\bibitem{lenoir1986optimal}
{\sc M.~Lenoir}, {\em Optimal isoparametric finite elements and error estimates
  for domains involving curved boundaries}, SIAM Journal on Numerical Analysis,
  23 (1986), pp.~562--580.

\bibitem{muller2013highly}
{\sc B.~M{\"u}ller, F.~Kummer, and M.~Oberlack}, {\em Highly accurate surface
  and volume integration on implicit domains by means of moment-fitting}, Int.
  J. Num. Meth. Eng., 96 (2013), pp.~512--528.

\bibitem{olshanskii2009finite}
{\sc M.~A. Olshanskii, A.~Reusken, and J.~Grande}, {\em A finite element method
  for elliptic equations on surfaces}, SIAM Journal on Numerical Analysis, 47
  (2009), pp.~3339--3358.

\bibitem{oswald}
{\sc P.~Oswald}, {\em On a {BPX}-preconditioner for $\mathbb{P}_1$ elements},
  Computing, 51 (1993), pp.~125--133.

\bibitem{parvizianduesterrank07}
{\sc J.~Parvizian, A.~D\"uster, and E.~Rank}, {\em Finite cell method}, Comput.
  Mech., 41 (2007), pp.~121--133.

\bibitem{saye2015hoquad}
{\sc R.~Saye}, {\em High-order quadrature method for implicitly defined
  surfaces and volumes in hyperrectangles}, SIAM J. Sci. Comput., 37 (2015),
  pp.~A993--A1019.

\bibitem{schoeberl2014cpp11}
{\sc J.~Sch\"oberl}, {\em C++11 implementation of finite elements in
  {NGSolve}}, Tech. Report ASC-2014-30, Institute for Analysis and Scientific
  Computing, September 2014.

\bibitem{sudhakar2013quadrature}
{\sc Y.~Sudhakar and W.~A. Wall}, {\em Quadrature schemes for arbitrary
  convex/concave volumes and integration of weak form in enriched partition of
  unity methods}, Computer Methods in Applied Mechanics and Engineering, 258
  (2013), pp.~39--54.

\end{thebibliography}

\end{document}